\documentclass[12pt]{article}

\usepackage{enumerate}
\usepackage{amsthm,amsmath,amssymb,url}
\usepackage{graphicx}
\usepackage{tikz}
\usetikzlibrary{graphs}
\usepackage{tkz-graph} 
\usepackage[colorlinks=true,citecolor=black,linkcolor=black,urlcolor=blue]{hyperref}

\theoremstyle{plain}
\newtheorem{theorem}{Theorem}
\newtheorem{lemma}[theorem]{Lemma}
\newtheorem{corollary}[theorem]{Corollary}
\newtheorem{proposition}[theorem]{Proposition}

\theoremstyle{definition}
\newtheorem{definition}[theorem]{Definition}
\newtheorem{example}[theorem]{Example}

\theoremstyle{remark}
\newtheorem{remark}[theorem]{Remark}

\DeclareMathOperator*{\tr}{tr}

\def\Z{{\mathbb{Z}}}
\def\R{{\mathbb{R}}}

\def\Z{{\mathbb{Z}}}
\def\og{{\Omega}}

\def\cP{{\cal P}}

%%%%%%%%%%%%%%%%%%%%%%%%%%%%%%%%%%%%%%%%%%%%%%%%%%%%%%%

\title{\bf Orbigraphs: A Graph Theoretic Analog to Riemannian Orbifolds}

\begin{document}

\maketitle

\begin{small}
\begin{center}
\noindent Kathleen Daly, Booz Allen Hamilton, daly\_kathleen@bah.com\\
\smallskip

Colin Gavin, 908 Devices, cgavin@908devices.com\\
\smallskip

Gabriel Montes de Oca,  Department of Mathematics \\ University of Oregon, gabem@uoregon.edu\\
\smallskip

Diana Ochoa, Department of Mathematical Sciences \\ Lewis and Clark College, dochoa@lclark.edu\\
\smallskip

Elizabeth Stanhope, Department of Mathematical Sciences \\ Lewis and Clark College, stanhope@lclark.edu\\
\smallskip

Sam Stewart,  School of Mathematics \\ University of Minnesota,  sams@umn.edu\\

\medskip

Mathematics Subject Classifications: 05C50, 05C20,60J10 \\
Keywords: graph spectrum, regular graph, directed graph
\end{center}
\end{small}

\begin{abstract} 
A Riemannian orbifold is a mildly singular generalization of a Riemannian manifold that is locally modeled on $R^n$ modulo the action of a finite group. Orbifolds have proven interesting in a variety of settings.  Spectral geometers have examined the link between the Laplace spectrum of an orbifold and the singularities of the orbifold.  One open question in this field is whether or not a singular orbifold and a manifold can be Laplace isospectral. Motivated by the connection between spectral geometry and spectral graph theory, we define a graph theoretic analogue of an orbifold called an orbigraph.  We obtain results about the relationship between an orbigraph and the spectrum of its adjacency matrix. We prove that the number of singular vertices present in an orbigraph is bounded above and below by spectrally determined quantities, and show that an orbigraph with a singular point and a regular graph cannot be cospectral.  We also provide a lower bound on the Cheeger constant of an orbigraph.  
\end{abstract}

\subsection*{Acknowledgements}
This work was supported in part by the Rogers Summer Research Program and Fairchild Program at Lewis \& Clark College.  The authors would like to thank Omar Lopez and Yung-Pin Chen for foundational work with orbigraphs and discussions of Markov processes, respectively.  We also thank the reviewer for their helpful suggestions.

\section{Introduction}

 A Riemannian orbifold is a mildly singular generalization of a Riemannian manifold.  A point in an $n$-dimensional manifold is contained in a neighborhood that is homeomorphic to $\R^n$.   A point in an $n$-dimensional orbifold is contained in a neighborhood that is homeomorphic to a quotient of $\R^n$ under the action of a finite group.  Two useful examples of orbifolds to consider are the $\Z_n$-football (Figure \ref{fig1}) and the $\Z_n$-teardrop (Figure \ref{fig2}):

\begin{example}\label{exa:football} Let $\Z_n$ act on a two-dimensional sphere by rotations generated by a $2\pi/n$ radian rotation about an axis passing through the center of the sphere.  The quotient of the sphere under this action is the $\Z_n$-football.  Points lying on the intersection of the sphere with the axis of rotation are fixed by all rotations.  The images in the $\Z_n$-football of these points are the conical points at the north and south poles of the football.  If the local lift of a point in an orbifold has non-trival isotropy, the point is called a \emph{singular point} in the orbifold.  The singular set of the $\Z_n$-football consists of the cone points at its north and south poles.
\end{example}

\begin{example}\label{exa:teardrop}  The $\Z_n$-teardrop is topologically a 2-sphere except for a single point whose neighborhood is locally modeled on the cone $\R^2/\Z_n$, where $\Z_n$ acts by rotations around a fixed point.  Thus the $\Z_n$-teardrop's singular set consists of the isolated cone point. Thurston \cite{thur} showed that unlike the $\Z_n$-football, the $\Z_n$-teardrop cannot be obtained as the quotient of a manifold under a smooth, discrete group action. 

\end{example}

\begin{figure}\label{ofds}
\centering
\begin{minipage}{.35\textwidth}
  \centering
 \includegraphics[width=1.5in]{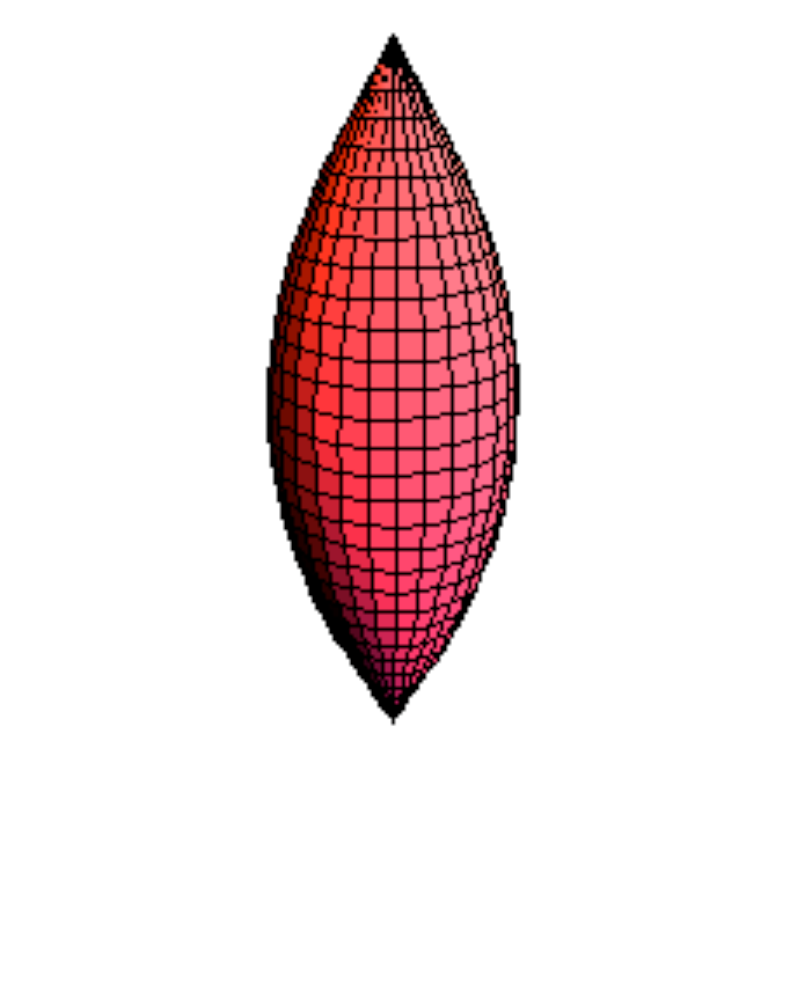}   
  \caption{Football obtained by 180 degree rotation of sphere}\label{fig1}
   \label{football}
\end{minipage}
\hspace{2cm}
\begin{minipage}{.35\textwidth}
  \centering
  \includegraphics[width=1.25in]{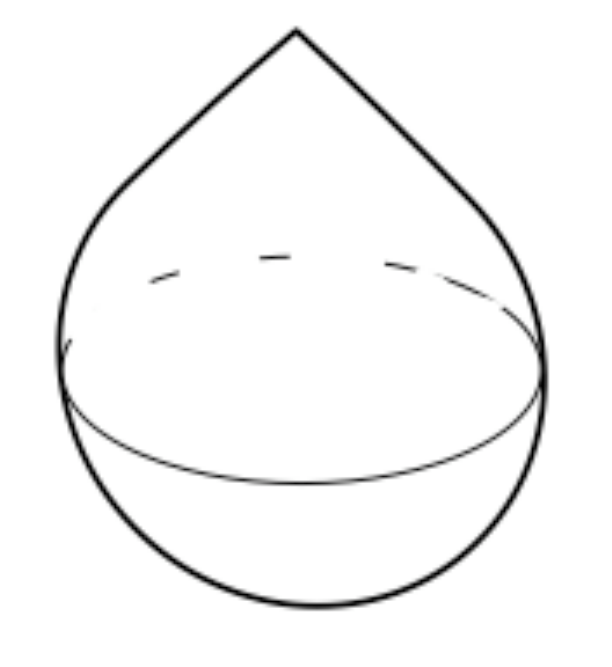}
   \caption{Teardrop orbifold}\label{fig2}
  \end{minipage}
  \hfill
\end{figure}

Introduced by Satake \cite{MR18:144a} in 1956 under the name $V$-manifold, and later renamed and studied as orbifolds by Thurston \cite{thur}, orbifolds have proven interesting in a variety of settings ({see \cite{MR2359514, gordon12, MR93m:58114}, for example).  Of particular interest are results relating the eigenvalue spectrum of the Laplace operator on a Riemannian orbifold (an orbifold endowed with a suitably invariant Riemannian metric) to the singular set of the orbifold.  For example, in the presence of a curvature hypothesis, the fifth author showed \cite{MR2155380} that the Laplace spectrum constrains the structure of the singular set.  One fundamental orbifold spectral geometry question that remains open is whether or not the Laplace spectrum actually detects the presence of singular points.
    
Brooks \cite{MR1191744, MR1697378} proposes viewing $k$-regular graphs as combinatorial analogs of smooth manifolds.  The infinite $k$-regular tree $T_k$ is viewed as the graph theoretic version of the universal cover of a finite $k$-regular graph.  A finite $k$-regular graph $\Gamma$ is studied as the quotient of $T_k$ by the fundamental group of $\Gamma$ in analogy to the study of quotients of the universal cover of a manifold under the action of a discrete co-compact group of isometries acting freely.  In this setting Brooks obtains several results including a characterization of Ramanujan graphs, a partial converse to Sunada's Theorem, and links between the spectrum of a $k$-regular graph and the graph's diameter and girth.

Following Brooks's analogy, observe that the action of a discrete, co-compact group of isometries which is not free yields a quotient space that is an orbifold rather than a manifold.  Given the successful examination of orbifolds from the perspective of spectral geometry, we seek to extend Brooks' analogy one step further by first proposing a graph theoretic analog of an orbifold and, second, applying the lense of spectral graph theory to orbifold graphs. References in the literature to an orbifold-like class of graphs are limited.  Brooks \cite{MR1697378} himself describes an ``orbifold graph" as a quotient of a $k$-regular graph under a non-free group action. He offers orbifold graphs as a motivating idea, but chooses to ``avoid entering into the technicalities of `orbifold graphs.'"  Lafont \cite{MR2855898}  describes an analogy between orbifolds and objects from Bass-Serre Theory \cite{MR1239551} called \emph{graphs-of-groups}.  Although the present work has its roots in the ideas of Brooks, the graphs that we examine here can be viewed as a generalization of the edge-index graph of a graph of groups.

We define an \emph{orbigraph} to be a member of the following class of weighted, directed graphs.

 \begin{definition}\label{def:OrbigraphDefn}
    An \textbf{orbigraph of degree $k$ ($k$-orbigraph)} is a finite, weighted, directed graph $\Omega$ where the adjacency matrix $A$ of $\Omega$ satisfies the following:
    
    \begin{enumerate}[(i)]
      \item $A_{ij} \in \mathbb{Z}_{\ge 0}$
      \item $\sum_j A_{ij} = k$
      \item $A_{ij} > 0$ if and only if $A_{ji} > 0$.
    \end{enumerate}
  \end{definition}
  
\begin{figure}
\centering
\begin{minipage}{.35\textwidth}
 \centering
\begin{tikzpicture}
\GraphInit[vstyle=Normal]
\SetGraphUnit{3}
\Vertex[Math]{v_1}
\EA[Math](v_1){v_2}
\Edge[style={->,bend left}](v_1)(v_2)
\Edge[style={->,bend left},label=3,labelstyle=above](v_2)(v_1)
\Loop[dist=1.5cm,dir=WE,style={->},labelstyle={fill=white},label=2](v_1)
\end{tikzpicture} 
\caption{a small 3-orbigraph}
\label{smallexa}
\end{minipage}
\hfill
\begin{minipage}{.6\textwidth}
  \centering
 \begin{tikzpicture}
[scale=.9]
\GraphInit[vstyle=Normal]
\SetGraphUnit{3}
\begin{scope}[rotate=51.5]
\Vertices[Math]{circle}{v_1,v_2,v_3,v_4,v_5,v_6,v_7}
\Edge[style={->,bend right},label=2,labelstyle={fill=white}](v_1)(v_2)
\Edge[style={->,bend right}](v_1)(v_7)
\Edge[style={->,bend right}](v_2)(v_1)
\Edge[style={->,bend right}](v_2)(v_3)
\Edge[style={->,bend left}](v_2)(v_5)
\Edge[style={->,bend right}](v_3)(v_4)
\Edge[style={->,bend right}](v_3)(v_7)
\Edge[style={->,bend right}](v_3)(v_2)
\Edge[style={->,bend right}](v_4)(v_5)
\Edge[style={->,bend right},label=2,labelstyle={fill=white}](v_4)(v_3)
\Edge[style={->,bend right}](v_5)(v_4)
\Edge[style={->,bend right}](v_5)(v_6)
\Edge[style={->}](v_5)(v_2)
\Edge[style={->,bend right}](v_6)(v_7)
\Edge[style={->,bend right},label=2,labelstyle={fill=white}](v_6)(v_5)
\Edge[style={->,bend right}](v_7)(v_6)
\Edge[style={->}](v_7)(v_3)
\Edge[style={->,bend right}](v_7)(v_1)
\end{scope}
\end{tikzpicture}
\caption{a 3-orbigraph with 7 vertices}
\label{bigexa}
\end{minipage}
\end{figure}

Figures \ref{smallexa} and \ref{bigexa} show two examples of orbigraphs.   

\begin{remark}\label{connected}
All orbigraphs discussed below will be assumed to be connected unless noted otherwise. Condition (iii) in Definition~\ref{def:OrbigraphDefn} implies that a connected orbigraph must be strongly connected. 
Nonzero diagonal entries in the adjacency matrix of an orbigraph correspond to weighted loops in the orbigraph.
\end{remark}

In Section \ref{topology} below we demonstrate the analogy between orbigraphs and orbifolds through the following three points:
\begin{itemize}
\item[a.] The local structure of a vertex in a $k$-orbigraph is that of the quotient of a $k$-regular graph just as the local structure of a $k$-dimensional orbifold is the quotient of a $k$-dimensional manifold.
\item[b.] Some vertices in an orbigraph have the same local structure as a vertex in a regular graph and some do not.  This leads us to the definition of regular and singular vertices in an orbigraph -- an essential piece of the analogy between orbifolds and orbigraphs.
\item[c.] We show that some orbigraphs can be obtained as the quotient of a finite regular graph under an equitable partition and some cannot.  This mirrors the fundamental fact from the geometric setting that orbifolds are divided into two classes: those that are covered by a manifold (like the football) and those that are not (like the teardrop).  Indeed, the presence of singular objects that are not merely quotients of regular objects saves the study of orbifolds and orbigraphs from being simply a reduced version of a known field of study.
\end{itemize}

Section~\ref{markov} connects orbigraphs to the theory of Markov chains.  In Section~\ref{bcc} Markov chain methods are used to obtain a graph theoretic characterization of when an orbigraph can be obtained as the quotient of a finite regular graph, and when it cannot. This characterization makes it easy to generate examples of orbigraphs with these properties, facilitating our later examination of how spectral results for orbifolds carry over to the orbigraph setting.   Also using Markov chain methods we provide a lower bound on the Cheeger constant of a $k$-orbigraph in terms of $k$ and the size of its vertex set.  This adds a third family to the list in Chung \cite{MR2135772} of families of directed graphs that satisfy similar bounds.  It would be interesting to know if the bound presented here is sharp, or if an improved bound could be used to obtain a strong upper bound on the convergence of random walks on orbigraphs. Our examination of the Cheeger constant on orbigraphs is the topic of Section~\ref{cheeger}.

In Section \ref{spectral} we follow the philosophy of Brooks and ask questions from the spectral geometry of orbifolds in the orbigraph setting.  The orbigraph spectrum discussed here is the list of eigenvalues of the adjacency matrix of an orbigraph.
 Because the analogy between orbifolds and orbigraphs extablished in Section~\ref{topology} is strong, the questions carry over naturally and we obtain several interesting results:

\begin{itemize}
\item[a.] We show that the spectrum does not detect whether or not an orbigraph can be obtained as the quotient of a finite $k$-regular graph.  The analogous question for orbifolds is still an open problem in spectral geometry.
\item[b.] The number of singular points in an orbigraph can be bounded both above and below by spectrally-determined quantities.  In the geometric setting one can seek spectral bounds on the number of components of the singular set.  In dimension two, the fifth author and Proctor \cite{MR2579379} obtained a result of this type under a curvature hypothesis.
\item[c.] The spectrum of an orbigraph detects the presence of singular points.  As mentioned above, this question is still open in the orbifold setting.
\end{itemize}

\section{Orbigraphs as discrete orbifolds}\label{topology}

\subsection{Local structure of a $k$-orbigraph}

The local structure of an orbigraph is that of a quotient of a $k$-regular graph.  There are multiple ways to define the quotienting process for graphs.  Here quotient graphs will be formed with respect to an equitable partition.  The definition given below uses the approach of Barrett, Francis and Webb \cite{MR3573808} to extend the definition of an equitable partition from the familiar setting of simple graphs to the more general setting of weighted directed graphs.   We also follow the thorough treatment of the simple graph case in Chapter 5 of Godsil \cite{MR1220704}.  

In what follows let $w(u,v)$ denote the weight of directed edge $(u,v)$.

\begin{definition}\label{def:EqPartitionDefn}  Let $\Gamma$ be an graph (possibly directed, weighted, or both) and $${\cal P} = \{V_1, V_2, \ldots, V_m \}$$ be a partition of its vertices. 
\begin{itemize}
\item[a.] We say ${\cal P}$ is an \emph{equitable partition} if for all pairs $i,j$ the number $\sum_{v \in V_j} w(u, v)$ is the same for each element $u$ in $V_i$. 
\item[b.] Given an equitable partition ${\cal P}$ on $\Gamma$, the weighted directed graph with adjacency matrix $A_{ij}=\sum_{v \in V_j} w(u, v)$, $u$ in $V_i$, is called the \emph{quotient graph} of $\Gamma$ with respect to ${\cal P}$ and will be denoted $\Gamma/\cal P$.
\end{itemize}
\end{definition}

\begin{remark}\label{rem:orbitpart} If a group $G$ acts on a simple graph $\Gamma$ by automorphisms, the vertex orbits of the action form an equitable partition of the vertex set of $\Gamma$.  This type of equitable partition is called an \emph{orbit partition}. In this case the quotient graph will be written $\Gamma/G$.

\end{remark}

To discuss the local structure of an orbigraph we introduce further terms from graph theory.  Note that a non-directed edge $\{v,w\}$ of weight $n$ in a graph will be viewed as being equivalent to a pair of weight $n$ directed edges $(v,w)$ and $(w,v)$, and vice-versa. 

\begin{definition}
\begin{itemize}
\item[a.] The \emph{$k$-star graph} is the complete bipartite graph $K_{1,k}$ and will be denoted $S_k$.  The vertex with degree $k$ in $S_k$ is the \emph{central vertex} of $S_k$.
\item[b.] The \emph{neighborhood} of a vertex $v$ in an undirected graph $\Gamma$ is the subgraph of $\Gamma$ including the vertex $v$, all vertices $w$ adjacent to $v$, and all edges $\{v,w\}$.  
\item[c.] The \emph{out-neighborhood} of a vertex $v$ in a directed graph $\Delta$ is the directed subgraph of $\Delta$ including vertex $v$, all vertices $w$ at which edges initiating at $v$ terminate, and all directed edges $(v,w)$ with initial vertex $v$. 
\end{itemize} 
\end{definition}

Because the neighborhood of each vertex in a simple $k$-regular graph is $S_k$ we view a simple $k$-regular graph as the graph theoretic analog of a $k$-dimensional manifold.  

Let $G$ be a group of graph automorphisms of $S_k$ and form the quotient graph $S_k/G$.  The central vertex $c$ of $S_k/G$ is the vertex in $S_k/G$ associated to the element of the orbit partition on $S_k$ containing the central vertex of $S_k$.  The out-neighborhood of $c$ in $S_k/G$ is a weighted star graph with between 1 and $k$ edges.  The sum of the weights over all edges in the out-neighborhood of $c$ is $k$.  

\begin{example}  There are only three different weighted, directed graphs that arise as quotients of $S_3$ by a group of graph automorphisms. Figure \ref{quotient3stars} illustrates the out-neighborhoods of the central vertex in each of these three quotients.
\end{example}
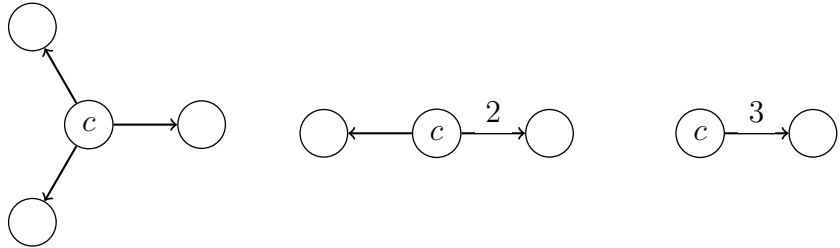
\begin{figure}
\centering
\begin{minipage}{.3\textwidth}
  \centering
 \begin{tikzpicture}
\GraphInit[vstyle=Normal]
\SetGraphUnit{1.5}
\begin{scope}[rotate=120]
\Vertices[Math,NoLabel]{circle}{x,y,z}
\Vertex[Math,x=0,y=0]{c}
\Edge[style={->}](c)(x)
\Edge[style={->}](c)(y)
\Edge[style={->}](c)(z)
\end{scope}
\end{tikzpicture}
\end{minipage}
\begin{minipage}{.3\textwidth}
  \centering
  \begin{tikzpicture}
\GraphInit[vstyle=Normal]
\SetGraphUnit{1.5}
\Vertex[Math,NoLabel]{x}
\EA[Math](x){c}
\EA[Math,NoLabel](c){y}
\Edge[style={->}](c)(x)
\Edge[label=2,labelstyle=above,style={->}](c)(y)
\end{tikzpicture}
\end{minipage}
\begin{minipage}{.3\textwidth}
  \centering
  \begin{tikzpicture}
\GraphInit[vstyle=Normal]
\SetGraphUnit{1.5}
\Vertex[Math]{c}
\EA[Math,NoLabel](c){x}
\Edge[label=3,labelstyle=above,style={->}](c)(x)
\end{tikzpicture}
  \end{minipage}
\caption{Out-neighborhoods of the central vertex in quotients of $S_3$.}
  \label{quotient3stars}
\end{figure}

Because all row sums in the adjacency matrix of a $k$-orbigraph $\og$ are $k$, the out-neighborhood of a vertex $v$ in $\og$ is identical to the out-going neighborhood of the central vertex in some quotient of a $k$-star.  In this way, a $k$-star quotient provides the local model of the neighborhood of a point in an orbigraph.  Our interest in the local structure of an orbigraph at a vertex is in the number of outgoing edges and the weights of those edges.  The terminal point of an outgoing edge is not important.  Because of this the out-neighborhood of a vertex with a loop is taken with the loop `undone.'  For example, vertex $v_1$ in Figure \ref{smallexa} is locally modeled on the middle graph in Figure \ref{quotient3stars}.

To complete our analogy between the local structure of orbifolds and the local structure of orbigraphs we observe that requirement (iii) in Definition \ref{def:OrbigraphDefn} corresponds to the fact that if local neighborhoods $U, V$ in an orbifold satisfy $U\cap V \ne \emptyset$ then we also have $V \cap U \ne \emptyset$.

\subsection{Singular points in an orbigraph}

The key feature of the study of orbifolds that distinguishes it from manifold theory is the presence of orbifold singular points.  We define a singular vertex in an orbigraph in the following way.
\begin{definition}\label{def:SingularPoint}  A vertex $v$ of an orbigraph is \emph{singular} if any outgoing edge from $v$ has weight greater than one.  A vertex that is not singular is called \emph{regular}.
\end{definition}
We see that regular graphs contain no singular vertices, as required by our analogy between regular graphs and manifolds.

\begin{example} Both vertices in the orbigraph in Figure \ref{smallexa} are singular.  Vertices $v_1, v_4$ and $v_6$ in the orbigraph in Figure \ref{bigexa} are singular, and the rest are regular. 
\end{example}  

In contrast to the orbifold setting, singular points in an orbigraph are not marked with an isotropy group.  However we can quantify the extent to which a vertex $v$ is singular by noting the number of outgoing edges from $v$ that have weight greater than one.  We can also consider the list of weights of outgoing edges from $v$.  As mentioned in the introduction, graphs-of-groups offer an alternative graph theoretic interpretation of  orbifolds.  A graph-of-groups, in contrast to an orbigraph, has vertices that are marked with a group in a way that is analogous to an orbifold isotropy group.

\subsection{Good and bad orbigraphs}

In Example \ref{exa:football} we saw that the football orbifold is the quotient of a sphere under the smooth action of a finite group.  In Example \ref{exa:teardrop} it was asserted that the teardrop orbifold cannot be obtained as a quotient in this manner. Orbifolds that can be written as the quotient of a manifold under a smooth, discrete group action are called \emph{good}.  Otherwise they are called \emph{bad}.  Following these ideas we define \emph{good} and \emph{bad} orbigraphs as follows.

\begin{definition}  A $k$-orbigraph $\Omega$ is said to be \emph{good} if it can be obtained as the quotient of a finite $k$-regular graph $\Gamma$ via an equitable partition on $\Gamma$.  If an orbigraph is not good it is called \emph{bad}.
\end{definition}

\begin{example}\label{exa:goodbad}  The orbigraph in Figure 3 is good because it is the quotient of the complete graph $K_4$, as presented in Figure \ref{k4}, by the group $\Z_3$ generated by a $2\pi/3$ radian rotation about the center vertex.  The orbigraph in Figure 4 is bad. This follows from Theorem \ref{thm:balancedcycle} below and the observation that the product of edge weights along cycle $(v_1,v_2,v_3,v_4,v_5,v_6,v_7,v_1)$ is two, while the product of edge weights along the reverse cycle $(v_1,v_7,v_6,v_5,v_4,v_3,v_2,v_1)$ is four.
\end{example}

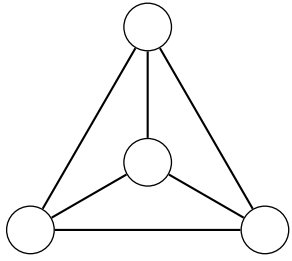
\begin{figure}
\centering
\begin{tikzpicture}
[scale=.6]
\GraphInit[vstyle=Normal]
\SetGraphUnit{3}
\begin{scope}[rotate=90]
\Vertices[Math,NoLabel]{circle}{v_1,v_2,v_3}
\Edge(v_1)(v_2)
\Edge(v_2)(v_3)
\Edge(v_3)(v_1)
\end{scope}
\Vertex[Math,NoLabel]{v_4}
\Edge(v_3)(v_4)
\Edge(v_1)(v_4)
\Edge(v_2)(v_4)
\end{tikzpicture}
\caption{Graph diagram of $K_4$.}
  \label{k4}
\end{figure}

The analogy with the covering theory of topological spaces is further strengthened by the following two lemmas. 

\begin{lemma}\label{lemma:EqPartitionQuotient}  If $\og$ is a $k$-orbigraph and ${\cal P}$ is an equitable partition on the vertices of $\og$, then $\og / {\cal P}$ is a $k$-orbigraph.
    \end{lemma}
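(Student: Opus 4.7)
The plan is to verify that the adjacency matrix $B$ of $\Omega/\mathcal{P}$, defined by $B_{ij} = \sum_{v \in V_j} w(u,v)$ for any $u \in V_i$, satisfies each of the three conditions of Definition~\ref{def:OrbigraphDefn} in turn. Well-definedness of $B_{ij}$ is built into the definition of an equitable partition, so I can freely pick representatives in each class.

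For condition (i), I would note that $B_{ij}$ is a finite sum of nonnegative integers $w(u,v)$ drawn from the adjacency matrix of $\Omega$, which satisfies (i) by hypothesis. For condition (ii), I would fix $u \in V_i$ and compute
\[
\sum_j B_{ij} \;=\; \sum_j \sum_{v \in V_j} w(u,v) \;=\; \sum_{v \in V(\Omega)} w(u,v) \;=\; k,
\]
where the second equality uses that $\{V_j\}$ partitions $V(\Omega)$ and the third uses condition (ii) for $\Omega$.

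The only step requiring genuine argument is condition (iii), and this is where the equitable partition hypothesis does essential work. Suppose $B_{ij} > 0$. Then for any (equivalently, some) $u \in V_i$ we have $\sum_{v \in V_j} w(u,v) > 0$, so there exist $u_0 \in V_i$ and $v_0 \in V_j$ with $w(u_0,v_0) > 0$. Condition (iii) for $\Omega$ then gives $w(v_0,u_0) > 0$. Therefore the sum $\sum_{u' \in V_i} w(v_0,u')$ is at least $w(v_0,u_0) > 0$, and since $\mathcal{P}$ is equitable this sum is the same for every element of $V_j$, so it equals $B_{ji}$. Hence $B_{ji} > 0$, and by symmetry $B_{ji} > 0 \Rightarrow B_{ij} > 0$.

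The main (and only) obstacle is condition (iii); I expect it to be unproblematic once one observes that equitability lets us transfer the existence of a single positive weight $w(u_0,v_0)$ from a representative pair to all representatives in $V_j$ via the row sum computed from $v_0$. Conditions (i) and (ii) are essentially bookkeeping.
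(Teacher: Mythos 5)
Your proposal is correct and follows essentially the same route as the paper: verify conditions (i) and (ii) by direct computation with a representative $u \in V_i$, and deduce (iii) from the corresponding property of $\Omega$ together with the well-definedness of $B_{ji}$ at the representative $v_0 \in V_j$. If anything, your treatment of (iii) is spelled out slightly more carefully than the paper's, but the underlying argument is the same.
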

    \begin{proof} Let $A$ denote the adjacency matrix of $\og / {\cal P}$, partition ${\cal P}=\{V_1, V_2, \dots, V_m\}$, and $w_\og(\cdot,\cdot)$ denote the weight function on directed edges in $\og$. Because $\og$ is an orbigraph, we know $w_\og(u, v)$ is a nonnegative integer for all vertices $u, v$ in $\og$. Hence $A_{ij} = \sum_{v \in V_j} w_\og(u,v)$, for any $u \in V_i$, is a nonnegative integer.  Fixing $i\in\{1, 2, \dots, m\}$,  and taking $u$ some element of $V_i$, consider the $i$th row sum of $A$:  
      \begin{align*}
        \sum_j A_{ij} &= \sum_j \sum_{v \in V_j} w_\og(u,v) \\
        &= \sum_{v \in \og} w_\og(u, v) = k.
      \end{align*}

      Finally suppose $A_{ij} > 0$. Then there must a $j\in\{1, 2, \dots, m\}$ for which any $u \in V_i$ has $w_\og(u,v)>0$ for some $v \in V_j$. Because $\og$ is an orbigraph, we must also have $w_\og(v, u) > 0$. Thus $A_{ji} > 0$.
    \end{proof}

\begin{definition}\label{def:cover}
We say that an orbigraph $\og_1$ covers an orbigraph $\og_2$ if there is an equitable partition ${\cal P}$ of the vertices of $\og_1$ such that $\og_1/{\cal P}= \og_2$.
\end{definition}

    \begin{lemma}\label{lemma:CoveringTransitivity}
      The covering relation is transitive.
    \end{lemma}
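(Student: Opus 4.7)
Let $\Omega_1$ cover $\Omega_2$ via equitable partition $\mathcal{P} = \{V_1, \dots, V_m\}$ of $V(\Omega_1)$, and let $\Omega_2$ cover $\Omega_3$ via equitable partition $\mathcal{Q} = \{W_1, \dots, W_\ell\}$ of $V(\Omega_2)$. The vertices of $\Omega_2$ correspond to the cells of $\mathcal{P}$, so each $W_s \in \mathcal{Q}$ is naturally a set of indices $\{i : V_i \in W_s\}$. The plan is to build an equitable partition $\mathcal{R}$ of $V(\Omega_1)$ by \emph{composing} $\mathcal{P}$ with $\mathcal{Q}$, via
\[
R_s \;=\; \bigcup_{V_i \in W_s} V_i, \qquad \mathcal{R} = \{R_1, \dots, R_\ell\},
\]
and then show $\Omega_1 / \mathcal{R} = \Omega_3$.

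First I would verify that $\mathcal{R}$ is equitable in $\Omega_1$. Fix $s, t$ and pick two vertices $u, u'$ in the same cell $R_s$, lying in $V_i$ and $V_{i'}$ respectively (possibly with $i = i'$). Denoting by $w_1$ the weight function on $\Omega_1$ and by $A^{(2)}$ the adjacency matrix of $\Omega_2$, the definition of $\Omega_2 = \Omega_1/\mathcal{P}$ gives
\[
\sum_{v \in R_t} w_1(u, v) \;=\; \sum_{V_j \in W_t} \sum_{v \in V_j} w_1(u, v) \;=\; \sum_{V_j \in W_t} A^{(2)}_{ij},
\]
and similarly for $u'$ with $i$ replaced by $i'$. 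Since $V_i$ and $V_{i'}$ belong to the common cell $W_s$ of the equitable partition $\mathcal{Q}$ on $\Omega_2$, these two sums over $W_t$ agree. Hence $\mathcal{R}$ is equitable.

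Next I would identify $\Omega_1/\mathcal{R}$ with $\Omega_3$. The same computation shows that the $(s,t)$-entry of the adjacency matrix of $\Omega_1/\mathcal{R}$ equals $\sum_{V_j \in W_t} A^{(2)}_{ij}$ for any $V_i \in W_s$, which is precisely the $(s,t)$-entry of the adjacency matrix of $\Omega_2/\mathcal{Q} = \Omega_3$. Together with Lemma~\ref{lemma:EqPartitionQuotient}, which guarantees that $\Omega_1/\mathcal{R}$ is an orbigraph in the first place, this yields $\Omega_1/\mathcal{R} = \Omega_3$, proving transitivity.

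The only subtle step is checking that $\mathcal{R}$ is equitable, because two vertices in the same cell $R_s$ need not lie in the same cell of $\mathcal{P}$; the equitable property has to be inherited from $\mathcal{Q}$ acting on $\Omega_2$ rather than directly from $\mathcal{P}$. Once one writes the row sums in terms of $A^{(2)}$, as above, both the equitability of $\mathcal{R}$ and the identification $\Omega_1/\mathcal{R} = \Omega_3$ drop out of the same short calculation, so no further obstacle arises.
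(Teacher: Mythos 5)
Your proof is correct, and it constructs the same object as the paper: the composite partition $\mathcal{R}$ whose characteristic matrix is the product $P_1P_2$ of the characteristic matrices of $\mathcal{P}$ and $\mathcal{Q}$. The difference is in the verification. The paper works entirely at the matrix level: it invokes the identity $A_1P_1 = P_1A_2$ (the weighted-directed analogue of Godsil's Lemma 2.1) for each covering, multiplies to get $A_1P_1P_2 = P_1P_2A_3$, and reads off that $P_1P_2$ encodes an equitable partition with quotient $\Omega_3$. You instead unwind that identity entrywise, writing the row sums $\sum_{v\in R_t} w_1(u,v)$ in terms of $A^{(2)}$ and letting the equitability of $\mathcal{Q}$ on $\Omega_2$ do the work. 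Your version is self-contained and makes explicit exactly where each hypothesis is used (in particular the subtlety you flag, that two vertices of $R_s$ may lie in different cells of $\mathcal{P}$, which is invisible in the matrix formulation); the paper's version is shorter but delegates the substance to a cited lemma that itself must be re-proved for weighted directed graphs. Both are complete arguments, and your closing observation that the equitability check and the identification $\Omega_1/\mathcal{R}=\Omega_3$ come from the same computation is exactly the content of the matrix identity the paper uses.
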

    \begin{proof} Suppose $\og_1$ is an orbigraph with equitable partition $\cP_1$ such that $\og_1 / \cP_1 = \og_2$, and $\og_2$ has an equitable partition $\cP_2$ such that $\og_2 / \cP_2 = \og_3$. We need to show there is an equitable partition $\cP_3$ of $\og_1$ such that $\og_1 / \cP_3= \og_3$. For $i=1,2$ let $A_i$ denote the adjacency matrix of orbigraph $\og_i$, and $P_i$ denote the characteristic matrix corresponding to partition $\cP_i$. By a straightforward modification of Godsil \cite[Lemma 2.1, p.~77]{MR1220704} to the setting of weighted, directed graphs we have that $A_1P_1 = P_1A_2$ and $A_2P_2 = P_2A_3$. Thus  $A_1P_1P_2 = P_1A_2P_2=P_1P_2A_3$.   We conclude $P_1P_2$ defines an equitable partition on $\og_1$ with quotient orbigraph $\og_3$.
\end{proof}

As a consequence of the previous two lemmas we obtain the following.

\begin{corollary}\label{cor:allgood} The quotient of any good orbigraph must also be good.
\end{corollary}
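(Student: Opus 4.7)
The plan is to derive Corollary \ref{cor:allgood} as an immediate consequence of Lemma \ref{lemma:CoveringTransitivity}. Suppose $\og$ is a good $k$-orbigraph and $\og'$ is a quotient of $\og$ with respect to some equitable partition $\mathcal{Q}$ of its vertices. By Lemma \ref{lemma:EqPartitionQuotient}, $\og'$ is itself a $k$-orbigraph, so the statement ``$\og$ covers $\og'$'' in the sense of Definition \ref{def:cover} is well defined.

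Since $\og$ is good, there exists a finite $k$-regular graph $\Gamma$ and an equitable partition $\mathcal{P}$ of $\Gamma$ with $\Gamma/\mathcal{P} = \og$. The key observation that lets transitivity fire is that any finite $k$-regular simple graph $\Gamma$ may itself be viewed as a $k$-orbigraph: its $0/1$ adjacency matrix has nonnegative integer entries, each row sums to $k$, and symmetry of simple adjacency yields condition (iii) of Definition \ref{def:OrbigraphDefn}. Thus $\Gamma$ covers $\og$ in the orbigraph sense.

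Now Lemma \ref{lemma:CoveringTransitivity} applies to the chain $\Gamma \to \og \to \og'$ and produces a single equitable partition $\mathcal{P}'$ on $\Gamma$ such that $\Gamma/\mathcal{P}' = \og'$. Since $\Gamma$ is $k$-regular, this exhibits $\og'$ as the quotient of a finite $k$-regular graph under an equitable partition, so $\og'$ is good by definition.

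I do not expect any real obstacle; the only step that requires a moment's thought is noting that the covering lemma, as stated for orbigraphs, actually applies to a $k$-regular graph in the top slot, which follows from the straightforward check above that $k$-regular graphs sit inside the class of $k$-orbigraphs.
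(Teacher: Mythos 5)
Your proof is correct and follows the same route the paper intends: the corollary is stated there as an immediate consequence of Lemma \ref{lemma:EqPartitionQuotient} and Lemma \ref{lemma:CoveringTransitivity}, applied to the chain $\Gamma \to \og \to \og'$. Your added observation that a finite $k$-regular graph is itself a $k$-orbigraph (so the transitivity lemma genuinely applies with $\Gamma$ in the top slot) is a worthwhile point that the paper leaves implicit.
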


\section{Orbigraphs and Markov chains}\label{markov}

The fact that the row sum of the adjacency matrix of an orbigraph is constant provides an immediate connection between orbigraphs and Markov chains.  Following Kelly \cite{MR554920}, we review ideas from the theory of Markov chains and introduce notation that will be used hereafter.  Matrix $A$ will denote the adjacency matrix of a $k$-orbigraph $\og$ with $n$ vertices.   Define $P=\frac{1}{k}A$.  Matrix $P$ is the transition matrix of a stationary Markov chain as all entries of $P$ lie in the interval $[0,1]$ and all rows of $P$ sum to 1.  Because the adjacency matrix of a $k$-orbigraph has right eigenvalue $k$ (to see this consider the eigenvector with all entries equal to one), $P$ has right eigenvalue one and stationary distribution vector $\pi=(\pi_1, \pi_2, \dots, \pi_n)$ with $\sum_{k=1}^n \pi_k =1$ for which $\pi P = \pi$.  By Remark~\ref{connected} we know $\Omega$ is strongly connected so $\pi$ is the unique stationary distribution of $P$.  

Our first result connecting orbigraphs to Markov chains is a bound on the minimal entry of $\pi$ in terms of the degree and number of vertices of an orbigraph.

\begin{lemma}\label{minentry} Let $\pi_m$ be a minimal entry in stationary distribution $\pi$.  Then $$\pi_m \ge \frac{1}{nk^{n-1}} \ .$$
\end{lemma}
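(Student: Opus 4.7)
The plan is to use the fact that $\pi$ is a left fixed vector of $P$, iterate it, and exploit strong connectivity of $\Omega$ to produce a short positive-probability path from a vertex of maximum stationary weight to a vertex of minimum stationary weight.

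First I would fix notation: let $\pi_M$ be a maximum entry of $\pi$. Since $\pi$ is a probability vector summing to $1$ over $n$ coordinates, we immediately get $\pi_M \ge 1/n$. Next, since $\pi P = \pi$ implies $\pi P^t = \pi$ for every $t \ge 1$, we have the identity
\[
\pi_m = \sum_{i=1}^n \pi_i (P^t)_{i m} \;\ge\; \pi_M \,(P^t)_{M m}
\]
for any exponent $t$ we choose. So the task reduces to producing a single $t$ for which $(P^t)_{Mm}$ admits a good lower bound.

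For this, I would invoke Remark~\ref{connected}: $\Omega$ is strongly connected, so there is a directed path in $\Omega$ from vertex $M$ to vertex $m$ of some length $\ell \le n-1$ (a simple directed path). Every edge along this path corresponds to a nonzero entry of $A$, and since the entries of $A$ are nonnegative integers, each such entry is at least $1$; hence the corresponding entry of $P = \frac{1}{k}A$ is at least $1/k$. The product of these $\ell$ transition probabilities gives a single contribution to $(P^\ell)_{Mm}$ of size at least $(1/k)^\ell$, and since all entries of $P^\ell$ are nonnegative this yields $(P^\ell)_{Mm} \ge 1/k^\ell \ge 1/k^{n-1}$.

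Combining the two bounds gives
\[
\pi_m \;\ge\; \pi_M \cdot (P^\ell)_{Mm} \;\ge\; \frac{1}{n}\cdot\frac{1}{k^{n-1}} \;=\; \frac{1}{nk^{n-1}},
\]
which is the desired estimate. No step here looks genuinely hard; the only thing to be careful about is that the minimum nonzero entry of $P$ really is $1/k$ (guaranteed by condition~(i) of Definition~\ref{def:OrbigraphDefn}) and that strong connectivity supplies a simple path of length at most $n-1$ (guaranteed by Remark~\ref{connected}). If one wanted a cleaner write-up one could replace the shortest path of length $\ell$ by any path of length exactly $n-1$ obtained by padding with self-loops where available, but this is not needed since $1/k^\ell \ge 1/k^{n-1}$ already.
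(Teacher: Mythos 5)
Your proposal is correct and follows essentially the same argument as the paper: both use $\pi P^\ell = \pi$ together with a directed path of length $\ell \le n-1$ from a maximal-weight vertex to the minimal-weight vertex, bound $(P^\ell)_{Mm}$ below by $1/k^{\ell} \ge 1/k^{n-1}$ using the fact that nonzero entries of $P$ are at least $1/k$, and combine with $\pi_M \ge 1/n$. The only cosmetic difference is that the paper phrases the edge bound via the minimal nonzero entry $c$ of $P$, whereas you bound the single-path contribution directly; the content is identical.
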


\begin{proof} Let $\pi_M$ denote a maximal entry in $\pi$ and let $c$ be the minimal nonzero value that appears as an entry in matrix $P$.  Because $\og$ is strongly connected there is a path of length $\ell < n$ from the $M$th vertex to the $m$th vertex of $\og$.  This implies that $(P^\ell)_{Mm}$ is nonzero.  Using this and the fact that $\pi P = \pi$ we have,
\begin{align*}
\pi_m &= \sum_{k=1}^n (P^\ell)_{km} \pi_k \\
& \ge (P^\ell)_{Mm} \pi_M \\
& \ge c^\ell \pi_M \\
& \ge c^{n-1} \pi_M.
\end{align*}
Because $P$ is the transition matrix associated to an orbigraph we have $c\ge \frac{1}{k}$.  Also, we know that $\pi_M \ge \frac{1}{n}$ because the sum of the entries of $\pi$ is one.  Thus $\pi_m \ge c^{n-1} \pi_M \ge \frac{1}{nk^{n-1}}$ as required.
\end{proof}

Here we relate the stationary distribution of a good orbigraph to that of its finite regular cover.

\begin{lemma}\label{lump}  Let $\Gamma$ be a $k$-regular graph with $N$ vertices, ${\cal P}= \{V_1, V_2, \dots, V_n\}$ be an equitable partition of the vertices of $\Gamma$, and $P$ be the transition matrix of the orbigraph $\Gamma/\cal{P}$.  Let $|V_i|$ denote the number of vertices in partition element $V_i$. The stationary distribution of $P$ is the $n$-tuple $\pi$ where $\pi_i=\frac{1}{N}|V_i|$. 
\end{lemma}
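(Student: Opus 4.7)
The plan is to exhibit the candidate vector $\pi$ with $\pi_i=\frac{|V_i|}{N}$, verify directly that $\pi$ is a probability distribution satisfying $\pi P=\pi$, and then invoke uniqueness of the stationary distribution (which holds because, by Lemma~\ref{lemma:EqPartitionQuotient} and Remark~\ref{connected}, $\Gamma/\mathcal{P}$ is a strongly connected orbigraph, so its transition matrix $P$ has a unique stationary distribution). Checking that $\pi$ is a probability distribution is immediate: entries are nonnegative, and $\sum_i \pi_i = \frac{1}{N}\sum_i |V_i|=1$ since $\mathcal{P}$ partitions the $N$-vertex set of $\Gamma$.

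The heart of the argument is the identity $\pi P = \pi$. The key input is a double-counting of edges of $\Gamma$ between pairs of partition classes. Let $A$ denote the adjacency matrix of $\Gamma/\mathcal{P}$, so $A_{ij}=\sum_{v\in V_j}w_\Gamma(u,v)$ for any $u\in V_i$ (the same value for every such $u$, by the equitable partition hypothesis). Summing over all $u\in V_i$ counts every edge from $V_i$ to $V_j$ in $\Gamma$ once, giving $|V_i|\,A_{ij}$. But since $\Gamma$ is an undirected $k$-regular graph, the number of edges from $V_i$ to $V_j$ equals the number from $V_j$ to $V_i$, so
\[
|V_i|\,A_{ij} \;=\; |V_j|\,A_{ji}
\]
for every pair $i,j$. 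This is the detailed-balance-like symmetry that powers the whole computation.

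Given this, the verification is a short calculation: using $P_{ij}=\frac{1}{k}A_{ij}$ and the identity above,
\[
(\pi P)_j \;=\; \sum_i \frac{|V_i|}{N}\cdot\frac{A_{ij}}{k} \;=\; \frac{1}{Nk}\sum_i |V_j|\,A_{ji} \;=\; \frac{|V_j|}{Nk}\sum_i A_{ji} \;=\; \frac{|V_j|}{Nk}\cdot k \;=\; \pi_j,
\]
where the last equality uses that row sums of $A$ are $k$ (condition (ii) of Definition~\ref{def:OrbigraphDefn} applied to the orbigraph $\Gamma/\mathcal{P}$). Uniqueness of the stationary distribution then forces $\pi$ to be the stationary distribution of $P$.

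The only mild obstacle is articulating the double-counting identity cleanly in a setting where $A$ is not a priori symmetric: one has to be explicit that the symmetry comes from $\Gamma$ itself being undirected $k$-regular, not from $A$. Once that is stated, the rest is bookkeeping.
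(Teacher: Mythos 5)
Your proof is correct, but it takes a different route from the paper's. The paper works upstairs on the cover: it observes that the transition matrix $Q=\tfrac{1}{k}A_\Gamma$ has the uniform stationary distribution $(\tfrac{1}{N},\dots,\tfrac{1}{N})$, and then cites Godsil's intertwining relation for equitable partitions ($A_\Gamma P_{\mathcal P}=P_{\mathcal P}A_{\Gamma/\mathcal P}$, with $P_{\mathcal P}$ the characteristic matrix) to push that distribution down to the quotient, where the uniform vector times $P_{\mathcal P}$ becomes exactly $\tfrac{1}{N}(|V_1|,\dots,|V_n|)$. You instead work entirely downstairs: the double-counting identity $|V_i|A_{ij}=|V_j|A_{ji}$, which follows from the equitable-partition definition of $A$ together with the symmetry of the undirected graph $\Gamma$, plus the row-sum condition $\sum_i A_{ji}=k$, gives $\pi P=\pi$ in one line, and uniqueness (from strong connectivity, per Remark~\ref{connected} and the setup of Section~\ref{markov}) finishes it. Your argument is more self-contained, avoiding the citation to Godsil entirely; it is also slightly stronger than needed, since $|V_i|A_{ij}=|V_j|A_{ji}$ is precisely the detailed balance relation $\pi_iP_{ij}=\pi_jP_{ji}$ (up to the factor $Nk$), of which stationarity is the consequence obtained by summing over $i$. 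In fact this same symmetry computation reappears in the paper's proof of Theorem~\ref{thm:balancedcycle}, so your route effectively front-loads that argument. The paper's version buys brevity and situates the lemma within the standard equitable-partition machinery; yours buys transparency and independence from the reference.
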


\begin{proof}  Let $Q$ denote the transition matrix obtained by scaling the adjacency matrix of $\Gamma$ by $\tfrac{1}{k}$.  The result  follows from the observation that the stationary distribution of $Q$ is the $N$-tuple $(\frac{1}{N},\frac{1}{N}, \dots, \frac{1}{N})$ and Godsil \cite[Lemma 2.2, p.78]{MR1220704}.
\end{proof}

\section{Characterizing good and bad orbigraphs}\label{bcc}

We use the Markov chain methods and notation from Section \ref{markov} to provide a quick way to distinguish good orbigraphs from bad orbigraphs.

\begin{definition}\label{def:balancedcycle}  An orbigraph $\og$ satisfies the \emph{balanced cycle condition} if the product of the edge weights along each directed cycle $v_1, v_2,\dots, v_l,v_1$ in $\og$ equals the product of the edge weights along the reverse directed cycle $v_1, v_{l}, v_{l-1}, \dots , v_1$. 
\end{definition}

\begin{theorem}\label{thm:balancedcycle}  An orbigraph is good if and only if it satisfies the balanced cycle condition.
\end{theorem}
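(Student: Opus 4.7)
The plan is to recognize the balanced cycle condition as Kolmogorov's reversibility criterion applied to the Markov chain with transition matrix $P = \frac{1}{k}A$ from Section \ref{markov}. The factor $1/k$ appears the same number of times on both sides of every cycle product, so the balanced cycle condition for $A$ is equivalent to the balanced cycle condition for $P$, and by Kolmogorov's criterion this is in turn equivalent to reversibility of the chain --- i.e., to the existence of a positive distribution $\pi$ satisfying the detailed balance equations $\pi_i A_{ij} = \pi_j A_{ji}$. Thus the theorem reduces to showing that $\Omega$ is good if and only if its associated Markov chain is reversible.

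For the forward direction, suppose $\Omega = \Gamma / \mathcal{P}$ with $\Gamma$ a $k$-regular graph and $\mathcal{P} = \{V_1, \dots, V_n\}$. By Lemma \ref{lump} the stationary distribution is $\pi_i = |V_i|/N$, and $|V_i| A_{ij}$ simply counts the edges of $\Gamma$ with one endpoint in $V_i$ and the other in $V_j$. Since $\Gamma$ is undirected this count is symmetric in $i,j$, yielding detailed balance and hence the balanced cycle condition.

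For the reverse direction, assume the balanced cycle condition and build the weights directly. Fix a base vertex $u_0$, set $\pi_{u_0} = 1$, and for any other vertex $v$ choose a directed path $u_0 \to u_1 \to \dots \to u_m = v$ in $\Omega$ (which exists since $\Omega$ is strongly connected) and define
\[
\pi_v \;=\; \prod_{i=0}^{m-1} \frac{A_{u_i u_{i+1}}}{A_{u_{i+1} u_i}}.
\]
Condition (iii) in Definition \ref{def:OrbigraphDefn} keeps the denominators positive, and the balanced cycle condition makes this value independent of the chosen path, so $\pi_u A_{uv} = \pi_v A_{vu}$ holds by construction. Because $A$ is integral, $\pi$ is rational; clearing denominators and then multiplying by a sufficiently large positive integer $c$ produces positive integers $n_1, \dots, n_n$ satisfying $n_i A_{ij} = n_j A_{ji}$ and as large (and as even) as we wish. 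Now assemble $\Gamma$ from disjoint vertex sets $V_i$ of size $n_i$: between each pair $V_i, V_j$ with $i \neq j$ place a simple bipartite graph that is $A_{ij}$-regular on the $V_i$ side and $A_{ji}$-regular on the $V_j$ side, and on each $V_i$ with $A_{ii} > 0$ place a simple $A_{ii}$-regular graph. The row-sum condition $\sum_j A_{ij} = k$ then makes $\Gamma$ a $k$-regular graph, and $\mathcal{P} = \{V_1, \dots, V_n\}$ is an equitable partition with $\Gamma / \mathcal{P} = \Omega$ by design.

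The main obstacle is ensuring that these building blocks exist as \emph{simple} graphs: the constructions require $A_{ij} \le n_j$, $A_{ji} \le n_i$, and $n_i A_{ii}$ even, all of which can be arranged by taking the scaling factor $c$ sufficiently large and even. Once these bounds hold, existence of the biregular bipartite pieces is a standard degree-sequence realization (e.g.\ Gale--Ryser), and existence of a simple $A_{ii}$-regular graph on $n_i$ vertices is elementary. The ability to freely rescale the rational $\pi$ to a convenient integer vector $(n_1, \dots, n_n)$ is what removes the only real obstruction to the reverse direction.
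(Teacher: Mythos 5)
Your proposal is correct and follows essentially the same route as the paper: translate the balanced cycle condition into reversibility of the associated Markov chain, use the equitable partition of a regular cover to verify detailed balance in the forward direction, and in the reverse direction clear denominators in a rational reversible distribution to get integer multiplicities $n_i$ with $n_i A_{ij} = n_j A_{ji}$ and assemble a $k$-regular cover from biregular bipartite blocks between the $V_i$ and regular graphs within each $V_i$. The only differences are cosmetic: you re-derive the Kolmogorov criterion explicitly via path products where the paper cites Kelly, and you realize the bipartite pieces by taking the scaling factor large and even plus Gale--Ryser, where the paper instead chooses the scaling factor as an lcm so that divisibility makes the block construction immediate.
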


A stationary Markov chain is said to satisfy the \emph{detailed balance equations} if
\[\pi_i P_{ij} = \pi_j P_{ji} \ \ \text{for all} \ i,j=1,2,\dots, n.\]

The Markov chain analog of the balanced cycle condition from Definition \ref{def:balancedcycle} is called the \emph{Kolmogorov criterion}.  In particular, an orbigraph satisfies the balanced cycle condition if and only if the corresponding Markov chain satisfies the Kolmogorov criterion.  We can now state a needed lemma.

\begin{lemma}\label{lem:dbeKolm}  A stationary Markov chain satisfies the detailed balance equations if and only if it satisfies Kolmogorov's criteria.
\end{lemma}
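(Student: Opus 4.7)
My plan is to prove the two directions separately. The forward direction is a short calculation: assuming the detailed balance equations $\pi_i P_{ij} = \pi_j P_{ji}$, I would multiply these identities along an arbitrary directed cycle $i_1 \to i_2 \to \cdots \to i_l \to i_1$ (closing with the equation indexed by $(i_l, i_1)$) to obtain
\[ \pi_{i_1} \pi_{i_2} \cdots \pi_{i_l} \cdot P_{i_1 i_2} P_{i_2 i_3} \cdots P_{i_l i_1} \;=\; \pi_{i_1} \pi_{i_2} \cdots \pi_{i_l} \cdot P_{i_1 i_l} P_{i_l i_{l-1}} \cdots P_{i_2 i_1}. \]
Since every $\pi_{i_s}$ is strictly positive by Lemma~\ref{minentry}, the common factor cancels and Kolmogorov's criterion falls out.

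For the reverse direction, the strategy is to construct a positive measure $\mu$ on the vertices of $\og$ that manifestly satisfies detailed balance, and then use uniqueness of the stationary distribution to identify a normalization of $\mu$ with $\pi$. After fixing a reference vertex $v_0$, for each vertex $v$ I would choose a directed path $v_0 = u_0, u_1, \ldots, u_m = v$ (which exists by Remark~\ref{connected}) and define
\[ \mu_v \;=\; \prod_{k=0}^{m-1} \frac{P_{u_k u_{k+1}}}{P_{u_{k+1} u_k}}, \qquad \mu_{v_0} = 1. \]
The ratios are well-defined because condition (iii) of Definition~\ref{def:OrbigraphDefn} forces $P_{ij}>0$ if and only if $P_{ji}>0$.

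The main technical obstacle is showing that $\mu_v$ does not depend on the chosen path. Given two paths from $v_0$ to $v$, concatenating one with the reverse of the other forms a directed cycle, and Kolmogorov's criterion applied to this cycle rearranges directly into the equality of the two candidate values of $\mu_v$. Once $\mu$ is well-defined, detailed balance on any edge $(i,j)$ with $P_{ij}>0$ is immediate from the construction: extending a path to $i$ by the single edge $(i,j)$ gives $\mu_j = \mu_i P_{ij}/P_{ji}$, i.e.\ $\mu_i P_{ij} = \mu_j P_{ji}$, while edges with $P_{ij}=P_{ji}=0$ are trivial. Summing this detailed balance relation over $i$ then yields $\mu P = \mu$, so the renormalization $\mu / \sum_w \mu_w$ is a stationary distribution and, by the uniqueness noted in Section~\ref{markov}, equals $\pi$. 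Detailed balance for $\mu$ thus transfers to $\pi$, completing the argument.
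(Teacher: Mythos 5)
Your argument is correct, but it takes a genuinely different route from the paper: the paper disposes of this lemma in one line by citing Theorems 1.2 and 1.7 of Kelly's book on reversibility, whereas you reconstruct the standard proof of Kolmogorov's criterion from scratch. Your forward direction (multiply the detailed balance identities around a cycle and cancel the product of the $\pi_{i_s}$, which are strictly positive by Lemma~\ref{minentry}) and your reverse direction (build a path-product measure $\mu$, use Kolmogorov's criterion to show path-independence, verify detailed balance edge-by-edge, sum over $i$ using $\sum_i P_{ji}=1$ to get $\mu P=\mu$, and invoke uniqueness of the stationary distribution to identify the normalization with $\pi$) are both sound, and they correctly exploit the orbigraph-specific facts that make the general Markov-chain hypotheses hold here: strong connectivity gives the paths and the uniqueness of $\pi$, and condition (iii) of Definition~\ref{def:OrbigraphDefn} gives the symmetric support of $P$ needed for the ratios. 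What your approach buys is a self-contained proof that makes visible exactly which structural features of orbigraph chains are used; what the citation buys is brevity and generality. One small point to make explicit: concatenating one path with the reverse of another produces a closed directed \emph{walk} that may repeat vertices, so you are implicitly using the version of Kolmogorov's criterion quantified over all closed walks (as in Kelly), not only over simple cycles; this is harmless but worth a sentence, either by taking that as the definition or by noting that the product identity over a closed walk follows from the identity over the simple cycles into which it decomposes.
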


\begin{proof}  This follows from combining Theorems 1.2 and 1.7 in Kelly \cite{MR554920}.
\end{proof}

\begin{proof}[Proof of Theorem \ref{thm:balancedcycle}]
Suppose $\Omega$ is a good orbigraph.  This implies $\Omega = \Gamma/{\cal P}$ where $\Gamma$ is a $k$-regular graph and ${\cal P}= \{V_1, V_2, \dots, V_n\}$ is an equitable partition on $\Gamma$.  Scaling the adjacency matrix of $\Gamma$ by $1/k$ yields the symmetric transition matrix $Q$ of a Markov chain.   We relate the stationary distribution of $Q$ to the stationary distribution of $P$, the transition matrix of $\Omega$,  by Lemma~\ref{lump}.  In particular $\pi_i=\frac{1}{N}|V_i|$, where $\pi$ denotes the stationary distribution of $P$ and $N$ is the number of vertices in $\Gamma$.  

The following computation confirms that $P$ satisfies the detailed balance equations.  The argument closely follows that of Tian and Kannan \cite[Theorem 2.16]{MR2220077} which is given in the setting of lumpable Markov chains.  It makes essential use of the fact that $\cal P$ is an equitable partition and that $Q$ is a symmetric matrix.
\begin{align*}
\pi_j P_{ji} &= \tfrac{1}{N}|V_j| P_{ji} \\
&= \tfrac{1}{N} |V_j|  \sum_{k\in V_i} Q_{jk} \\
&= \tfrac{1}{N} \sum_{l\in V_j} \sum_{k\in V_i} Q_{lk} \\
&= \tfrac{1}{N} \sum_{k\in V_i} \sum_{l\in V_j} Q_{kl} \\
&= \tfrac{1}{N} |V_i| \sum_{l\in V_j} Q_{kl} \\
&= \pi_i P_{ij}
\end{align*}
The fact that $\Omega$ satisfies the balanced cycle condition now follows from Lemma~\ref{lem:dbeKolm}.

Now suppose $\Omega$ is an orbigraph that satisfies the balanced cycle condition. By Lemma \ref{lem:dbeKolm}, $P$ and $\pi$ satisfy the detailed balance equations $\pi_i P_{ij} = \pi_j P_{ji} $.  Multiplying by $k$ on both sides gives $\pi_i A_{ij} = \pi_j A_{ji}$.  Because $A$ has all non-negative integer entries, $\pi$ will have all non-negative rational entries.  Thus there is an integer $m$ for which $m\pi = (d_1, d_2, \dots, d_n)$ is a vector of non-negative integers.  This allows us to write 
\begin{align}\label{normdbe}
d_i A_{ij} = d_j A_{ji},
\end{align}
an equality of products of non-negative integers.

We now build a finite $k$-regular cover $\Gamma$ of $\Omega$.  Let $X$ be the set of non-zero, non-diagonal entries of $A$.  Let $Y = \{A_{11}+1, A_{22}+1, \dots, A_{nn}+1\}$.  Let $c$ be the least common multiple of the integers in $X \cup Y$.  For each $i=1,2,\dots, n$ we take $V_i$ to be a set of $cd_i$ vertices.   The disjoint union $V_1 \sqcup V_2 \sqcup \dots \sqcup V_n$ forms the vertex set of $\Gamma$ and gives the needed vertex partition ${\cal P}$ of $\Gamma$. 

It remains to specify adjacency in $\Gamma$ in such a way that $\Gamma/{\cal P}=\og$.  Suppose $i\ne j$.  For the quotient $\Gamma/{\cal P}=\og$ to be valid each vertex in $V_i$ must be adjacent to $A_{ij}$ vertices in $V_j$, and each vertex in $V_j$ must be adjacent to $A_{ji}$ vertices in $V_i$.  Thus the number of edges  with one vertex in $V_i$ and one vertex in $V_j$, which we will denote by $e_{\{i,j\}}$, is simultaneously $A_{ij}|V_i|$ and $A_{ji}|V_j|$. The adapted detailed balance equations from Line \ref{normdbe} show that this requirement follows from our choice for the sizes of $V_i$ and $V_j$ as,
\begin{align*}
A_{ij}|V_i|=A_{ij}cd_i = A_{ji}cd_j = A_{ji}|V_j|.
\end{align*}
Because $A_{ij}$ divides $|V_j|$ and $A_{ji}$ divides $|V_i|$, we can distribute the $e_{\{i,j\}}$ edges connecting $V_i$ and $V_j$ with exactly $A_{ij}$ edges adjacent to each vertex in $V_i$ and exactly $A_{ji}$ edges adjacent to each vertex in $V_j$.  Because $(A_{ii}+1)$ divides $|V_i|$ we can require that all elements of $V_i$ are adjacent to exactly $A_{ii}$ other elements of $V_i$.  This completes the adjacency relations for $\Gamma$.

By construction we observe $\Gamma/{\cal P}=\og$.  The degree of a vertex $v$ in $\Gamma$ is $\sum_{j=1} A_{ij} =k$, thus $\Gamma$ is $k$-regular.  Should $\Gamma$ fail to be connected, any connected component $\Gamma'$ of $\Gamma$ will satisfy $\Gamma'/\cP = \og$.
\end{proof}

\begin{remark} Corollary \ref{cor:allgood} and Theorem \ref{thm:balancedcycle} imply that if an orbigraph $\og$ satisfies the balanced cycle condition then so does any orbigraph quotient of $\og$.  This stands in contrast to Tian and Kannan \cite[Example 2.17]{MR2220077}.
\end{remark}

\section{Bounding the Cheeger constant of an orbigraph}\label{cheeger}

Chung \cite{MR2135772} defined a Cheeger constant for directed graphs and obtained lower bounds on the Cheeger constant for both regular and Eulerian directed graphs.  Using $R$ to denote a $k$-regular directed graph on $n$ vertices and $E$ an Eulerian directed graph with $m$ edges, Chung showed
\begin{align} h(R) \ge \frac{2}{kn} \ \ \  \text{and} \ \ \ h(E) \ge \frac{2}{m}.\end{align}\label{chungineq}
Here we apply Chung's methods to obtain a lower bound on the Cheeger constant of an orbigraph.  We use notation from Section \ref{markov}. 

Define a function $F$ from $\Omega$ to the non-negative real numbers by
$$F(i,j) = \pi_i P_{ij}$$
where $i$ and $j$ are vertices in $\Omega$. This function is an example of a \emph{circulation} on $\Omega$ (see Chung \cite[Lemma 3.1]{MR2135772}).  Letting $S$ range over all non-empty proper subsets of the vertex set of $\Omega$, the Cheeger constant $h(\Omega)$ of $\Omega$ is defined as
\[h(\Omega)= \inf_S \frac{\sum_{i\in S, j \notin S} F(i,j)}{\min\left\{\sum_{j\in S} F(j), \sum_{j\in \bar{S}} F(j)\right\}} \]
where $F(j)=\sum_{i, i \rightarrow j} F(i,j)$ and $\bar{S}$ is the set of vertices of $\Omega$ that are not in $S$.

We have the following lower bound on the Cheeger constant of $\Omega$.

\begin{proposition} Let $\Omega$ be a $k$-orbigraph with $n$ vertices. Then 
\[h(\Omega)\ge \frac{2}{n^2k^n} \ .\]
\end{proposition}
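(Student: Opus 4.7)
The plan is to apply Chung's circulation-based approach by bounding the numerator and denominator of the Cheeger ratio separately, using the Markov-chain machinery already set up in Section~\ref{markov}. Fix a nonempty proper subset $S$ of the vertex set of $\Omega$, and estimate the ratio $N(S)/D(S)$, where $N(S) = \sum_{i\in S, j \notin S} F(i,j)$ is the numerator and $D(S) = \min\{\sum_{j\in S} F(j), \sum_{j\in \bar S} F(j)\}$ is the denominator.

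For the denominator, stationarity $\pi P = \pi$ gives, for every vertex $j$, $F(j) = \sum_i \pi_i P_{ij} = \pi_j$. Consequently $\sum_{j \in S} F(j) = \pi(S)$ and $\sum_{j \in \bar S} F(j) = \pi(\bar S)$, and because $\pi$ is a probability distribution summing to one,
$$D(S) = \min\{\pi(S), \pi(\bar S)\} \leq \tfrac{1}{2}.$$

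For the numerator, Remark~\ref{connected} guarantees that $\Omega$ is strongly connected, so there must exist at least one directed edge from some vertex $i \in S$ to some vertex $j \in \bar S$. Along such an edge $A_{ij} \geq 1$, hence $P_{ij} = A_{ij}/k \geq 1/k$; and Lemma~\ref{minentry} supplies the uniform bound $\pi_i \geq 1/(nk^{n-1})$. The contribution of that single crossing edge to $N(S)$ is therefore at least
$$\pi_i P_{ij} \ \geq \ \frac{1}{nk^{n-1}} \cdot \frac{1}{k} \ = \ \frac{1}{nk^n},$$
and all other terms in $N(S)$ are non-negative.

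Combining the two estimates gives $N(S)/D(S) \geq 2/(nk^n) \geq 2/(n^2 k^n)$ for every admissible $S$, so taking the infimum finishes the proof. The argument is mostly bookkeeping: the three ingredients (stationarity identifying $F(j)$ with $\pi_j$, strong connectivity forcing a crossing edge, and the minimum-entry bound of Lemma~\ref{minentry}) slot together with no real obstacle. The only point that might invite sharpening is the crude use of a single crossing edge and the worst-case entry bound; as the authors note in the introduction, tightening this estimate would be of independent interest because of its link to mixing rates of random walks on orbigraphs.
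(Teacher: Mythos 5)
Your proof is correct, and it follows the same overall strategy as the paper (bound the numerator from below via Lemma~\ref{minentry} plus the existence of a crossing edge of transition probability at least $1/k$, and bound the denominator from above). The one genuine difference is in the denominator: the paper bounds $\sum_{j\in S}F(j)$ crudely by the column sums of $P$, getting $\sum_{j\in S}F(j)+\sum_{j\in\bar S}F(j)\le n$ and hence $D(S)\le n/2$, whereas you use stationarity to note $F(j)=\sum_i \pi_i P_{ij}=(\pi P)_j=\pi_j$, so that $\sum_{j\in S}F(j)+\sum_{j\in\bar S}F(j)=1$ and $D(S)\le 1/2$. That identity is exactly what makes $F$ a circulation in Chung's sense, and it is valid here since $\pi$ is the stationary distribution of $P$. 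The payoff is that your argument actually establishes the stronger inequality $h(\Omega)\ge 2/(nk^n)$, improving the stated bound by a factor of $n$; you then weaken it to match the proposition. Given the authors' remark that they would welcome a sharper Cheeger bound, this strengthening is worth recording explicitly rather than discarding in the last line.
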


\begin{proof}  We begin by bounding the numerator in the expression defining the Cheeger constant.  Let $\pi_m$ denote a minimal entry in $\pi$.
\begin{align*}
\sum_{i\in S, j \notin S} F(i,j) &= \sum_{i\in S, j \notin S} \pi_i P_{ij}\\
&\ge \sum_{i\in S, j \notin S} \pi_m P_{ij} \\
&\ge \frac{1}{nk^{n}}. 
\end{align*}
The last line follows from Lemma~\ref{minentry} and the observation that the smallest  possible nonzero value for an entry in $P$ is $\tfrac{1}{k}$.

To bound the denominator first observe that $\sum_{j\in S} F(j)$ is no greater than the sum of the columns in $P$ associated to the vertices in $S$. Similarly with $\sum_{j\in \bar{S}} F(j)$.  Since the total sum of the entries in $P$ is $n$ we have
\[\sum_{j\in S} F(j)+\sum_{j\in \bar{S}} F(j) \le n.\]
Thus $\min\left\{\sum_{v\in S} F(v), \sum_{v\in \overline{S}} F(v)\right\} \le \frac{n}{2}$.

We see that for any choice of $S$ the quotient in the definition of the Cheeger constant must be greater than or equal to $\frac{2}{n^2k^n}$, completing the proof.
\end{proof}

\begin{remark} Chung uses the inequalities in (2) to obtain convergence bounds for a type of random walk on regular and Eulerian directed graphs. The presence of $n$ in the exponent in the denominator of the orbigraph bound make it too weak to obtain a similar orbigraph result.  It would be interesting to see if a better bound on the Cheeger constant of an orbigraph, should one exist, would allow a convergence result similar to the regular and Eulerian cases.
\end{remark}

\section{Spectral results for orbigraphs}\label{spectral}

Because different matrices can be associated to a given graph, a variety of graph spectra are examined in spectral graph theory.  Here the  \emph{spectrum} of an orbigraph $\og$ is defined to be the list of eigenvalues of the adjacency matrix of $\og$ with each eigenvalue repeated according to its multiplicity.  We will write the spectrum of an orbigraph with $n$ vertices as a multiset  $\{\lambda_1, \lambda_2, \dots, \lambda_n\}$.  The study of the spectral properties of directed graphs is relatively new and has yielded interesting applications as well as directed graph analogs of familiar graph theoretical results including Cheeger's Inequality (see  Chung \cite{MR2135772}, and Langville and Meyer \cite{MR2262054}, for example).  We focus on developing results that relate the spectrum of an orbigraph to its orbigraph structure.

\begin{remark}\label{rem:easyspec} Just as with $k$-regular graphs, the spectral radius of a $k$-orbigraph is $k$. In addition the number of eigenvalues in the spectrum of an orbigraph (counting multiplicity) is equal to the number of vertices in the orbigraph. 
\end{remark}

\begin{lemma}\label{lem:speccontain}  Suppose orbigraph $\Omega_1$ covers orbigraph $\og_2$.  Then the spectrum of $\Omega_2$ is contained in the spectrum of $\og_1$ as multisets.
\end{lemma}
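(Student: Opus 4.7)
The plan is to use the characteristic matrix of the equitable partition to exhibit the adjacency matrix $A_2$ of $\Omega_2$ as the restriction of the adjacency matrix $A_1$ of $\Omega_1$ to an invariant subspace, which will yield divisibility of characteristic polynomials and hence the multiset containment of spectra.

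First, let $\mathcal{P} = \{V_1, \dots, V_m\}$ be the equitable partition on $\Omega_1$ with $\Omega_1/\mathcal{P} = \Omega_2$, and let $n$ and $m$ denote the numbers of vertices in $\Omega_1$ and $\Omega_2$ respectively. Let $Q$ be the $n \times m$ characteristic matrix of $\mathcal{P}$, so that the $j$th column of $Q$ is the indicator vector of $V_j$. Because the $V_j$ are disjoint and cover the vertex set, the columns of $Q$ are linearly independent, so $Q$ has rank $m$. As noted in the proof of Lemma~\ref{lemma:CoveringTransitivity}, the weighted/directed version of Godsil's identity gives $A_1 Q = Q A_2$.

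Next, I would observe that the column space $W := \mathrm{col}(Q) \subseteq \mathbb{R}^n$ is $A_1$-invariant: for any $v \in \mathbb{R}^m$, one has $A_1(Qv) = Q(A_2 v) \in W$. Extend the basis of $W$ furnished by the columns of $Q$ to a basis of $\mathbb{R}^n$ by adjoining columns forming an $n \times (n-m)$ matrix $R$, so that $B := [\,Q \mid R\,]$ is invertible. Because $W$ is $A_1$-invariant, the matrix of $A_1$ with respect to this basis is block upper triangular:
\begin{equation*}
B^{-1} A_1 B = \begin{pmatrix} A_2 & * \\ 0 & C \end{pmatrix},
\end{equation*}
where the top-left block is $A_2$ precisely because $A_1 Q = Q A_2$, and $C$ is some $(n-m) \times (n-m)$ matrix representing the induced action on the quotient space $\mathbb{R}^n / W$.

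Finally, taking characteristic polynomials I get $\det(\lambda I - A_1) = \det(\lambda I - A_2)\cdot \det(\lambda I - C)$, so the characteristic polynomial of $A_2$ divides that of $A_1$. This is exactly the statement that the multiset of eigenvalues of $A_2$ (i.e.\ the spectrum of $\Omega_2$) is contained in the multiset of eigenvalues of $A_1$ (the spectrum of $\Omega_1$).

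The only subtle step is the block-triangular form, which hinges on $Q$ having full column rank so that the columns of $Q$ form part of a genuine basis of $\mathbb{R}^n$ and the restriction of $A_1$ to $W$ is represented in that sub-basis exactly by $A_2$; once this linear-algebraic packaging is in place, the divisibility of characteristic polynomials and the conclusion follow immediately. I do not expect any genuine obstacle beyond verifying this standard quotient-matrix identity in the weighted, directed setting, which is already handled in the cited generalization of Godsil.
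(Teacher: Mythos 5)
Your proof is correct and follows essentially the same route as the paper, which simply cites the argument of Godsil's Lemma 2.2 (Chapter 5) adapted to weighted directed graphs: that argument is exactly the intertwining relation $A_1Q = QA_2$ for the full-column-rank characteristic matrix $Q$, leading to a block upper triangular conjugate of $A_1$ and hence divisibility of characteristic polynomials. You have merely written out in full the linear algebra that the paper leaves to the citation.
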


\begin{proof}  This follows from the argument in Lemma 2.2 of Chapter 5 in Godsil \cite{MR1220704}, adjusted to allow the graph carrying the equitable partition to be a weighted, directed graph.
\end{proof}

\begin{corollary}  Any orbigraph with complex eigenvalues must be bad.
\end{corollary}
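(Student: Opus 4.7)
The plan is to argue by contrapositive: I will show that every good orbigraph has all real eigenvalues, from which the statement follows immediately (interpreting ``complex eigenvalues'' as non-real complex eigenvalues, as is standard).

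First, suppose $\Omega$ is a good $k$-orbigraph. By Definition~\ref{def:cover} and the definition of good, there exists a finite $k$-regular (simple, undirected) graph $\Gamma$ with an equitable partition $\mathcal{P}$ such that $\Gamma/\mathcal{P} = \Omega$, so $\Gamma$ covers $\Omega$ in the sense of Definition~\ref{def:cover}. Applying Lemma~\ref{lem:speccontain}, the spectrum of $\Omega$ is contained (as a multiset) in the spectrum of $\Gamma$.

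Next, since $\Gamma$ is a simple undirected graph, its adjacency matrix is symmetric, and therefore has only real eigenvalues by the spectral theorem. Combined with the multiset containment just established, every eigenvalue of $\Omega$ is real. Taking the contrapositive: if $\Omega$ has at least one non-real eigenvalue, then $\Omega$ cannot be good, i.e., $\Omega$ must be bad.

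There is essentially no obstacle here, as the argument is a one-line combination of Lemma~\ref{lem:speccontain} with the symmetry of the adjacency matrix of a regular cover. The only point requiring mild care is the convention that the regular cover $\Gamma$ is a simple undirected graph (as used implicitly in the construction in the proof of Theorem~\ref{thm:balancedcycle} and in the motivating analogy with manifolds); once this is in place, the symmetry of $A(\Gamma)$ and hence the reality of $\spec(\Gamma)$ are automatic.
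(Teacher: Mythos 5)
Your proof is correct and is essentially identical to the paper's: both combine Lemma~\ref{lem:speccontain} (spectrum of a quotient is contained in the spectrum of its cover) with the fact that a regular graph has a symmetric adjacency matrix and hence real eigenvalues. Your version just spells out the contrapositive more explicitly.
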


\begin{proof} This follows from Lemma \ref{lem:speccontain} and the fact that regular graphs have real eigenvalues.
\end{proof}

\begin{theorem}\label{specgoodbad} The spectrum of an orbigraph does not distinguish good orbigraphs from bad orbigraphs.
\end{theorem}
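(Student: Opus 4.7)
The statement is an existence claim: produce one good orbigraph and one bad orbigraph with identical spectra. The plan is to exhibit such a pair explicitly. First I would fix small parameters -- vertex count $n$ and degree $k$ -- and enumerate adjacency matrices $A$ with $A_{ij}\in\Z_{\ge 0}$, constant row sum $k$, and the zero-pattern symmetry required by Definition~\ref{def:OrbigraphDefn}(iii). For each candidate I would check the balanced cycle condition of Theorem~\ref{thm:balancedcycle} to tag it good or bad, and simultaneously compute its characteristic polynomial. The goal is to locate two candidates $A_g$ and $A_b$ with matching characteristic polynomials, one good and one bad.

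The main obstacle is that good orbigraphs are forced to have real spectrum: the balanced cycle condition, via Lemma~\ref{lem:dbeKolm}, says the transition matrix $P = A_g/k$ is reversible, hence conjugate to a symmetric matrix via $D=\operatorname{diag}(\sqrt{\pi_1},\dots,\sqrt{\pi_n})$. Any bad partner must therefore also have a fully real spectrum, whereas generic non-reversible Markov chains produce complex eigenvalues whenever a directed cycle carries unequal forward and backward weight products. I would therefore narrow the search to bad candidates whose adjacency matrices are built so that the cycle asymmetry is confined to a block whose characteristic factor is forced to have non-negative discriminant (or otherwise to factor over the reals). An exhaustive search over small $n$ and $k$ should suffice to produce a witnessing pair; a promising starting point is to begin with a good orbigraph obtained as an equitable quotient of a small regular graph and then perturb a non-reversible cycle of weights within a larger host matrix so as to preserve the characteristic polynomial.

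With the pair in hand, the proof is short. To certify that $\Omega_b$ is bad I would exhibit a single directed cycle whose weight product differs from that of the reversed cycle and invoke Theorem~\ref{thm:balancedcycle}. To certify $\Omega_g$ good I would either display an equitable partition ${\cal P}$ on an explicit $k$-regular graph $\Gamma$ with $\Gamma/{\cal P}=\Omega_g$, or verify the balanced cycle condition on a cycle basis. Cospectrality reduces to a one-line comparison of the two characteristic polynomials. The hard part is really only the search; once the example is on the table, the verification is elementary, and the theorem follows at once since the two orbigraphs are then cospectral but lie on opposite sides of the good/bad dichotomy.
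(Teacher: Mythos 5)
Your strategy is exactly the one the paper uses: the published proof consists of exhibiting one good and one bad $3$-orbigraph on four vertices, both with spectrum $\{-2,0,1,3\}$, certifying badness by pointing to a directed $4$-cycle whose forward and reverse edge-weight products disagree (invoking Theorem~\ref{thm:balancedcycle}), and certifying goodness by displaying an explicit $3$-regular graph on six vertices together with an equitable partition whose quotient is the good orbigraph. Your side observation --- that any bad partner of a good orbigraph must nevertheless have an all-real spectrum, because reversibility of the associated Markov chain makes the good orbigraph's transition matrix conjugate to a symmetric matrix and cospectrality transfers realness to the partner --- is correct and is a sensible way to prune the search space.

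The gap is that the witnessing pair is never actually produced. For a bare existence statement the explicit example \emph{is} the mathematical content; ``an exhaustive search over small $n$ and $k$ should suffice'' is a promissory note, not an argument, and nothing in your write-up guarantees a priori that the search terminates successfully (conceivably cospectrality could have forced goodness). As it happens the search does succeed already at $n=4$, $k=3$: the paper's bad example has a $4$-cycle carrying weight product $2$ one way and $4$ the other, its good cospectral partner is realized as the quotient of a $6$-vertex cubic graph by a partition into four cells of sizes $1,1,2,2$, and both have characteristic polynomial with roots $-2,0,1,3$. Once such a pair is written down, the rest of your outline (verify the cycle imbalance, verify the equitable partition, compare characteristic polynomials) is exactly the right finishing move; without it the proof is incomplete.
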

\begin{proof}  The orbigraph on the left in Figure \ref{goodbadcosp} and the orbigraph in the center of the Figure both have spectrum $\{-2,0,1,3\}$.  However the orbigraph on the left is bad and the orbigraph in the center is good.  To see that the left hand orbigraph is bad apply Theorem \ref{thm:balancedcycle} and the fact that the product of the edge weights along cycle $(v_1, v_2, v_3, v_4)$ is not equal to the product of the edge weights of this cycle reversed.  The center orbigraph is good because it is covered by the $3$-regular graph on the right side of Figure  \ref{goodbadcosp} using the indicated equitable partition.  
\end{proof}

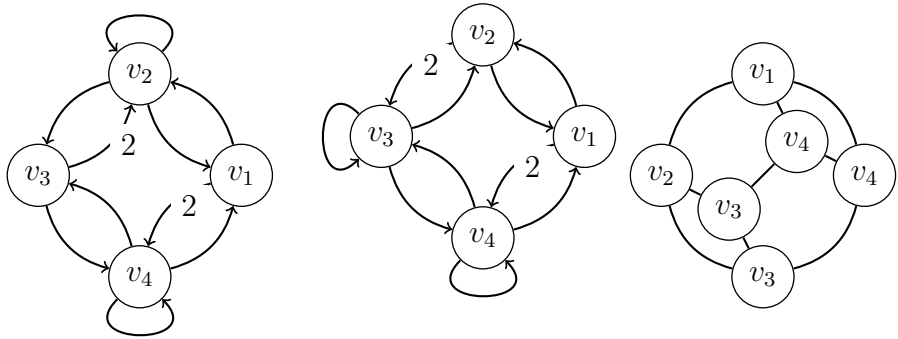
\begin{figure}

\centering
\begin{minipage}{.3\textwidth}
 \centering
\begin{tikzpicture}
[scale=.45]
\GraphInit[vstyle=Normal]
\SetGraphUnit{3}
\Vertices[Math]{circle}{v_1,v_2,v_3,v_4}
\Edge[style={->,bend right}](v_1)(v_2)
\Edge[style={->,bend right}](v_2)(v_3)
\Loop[dist=2cm,dir=NO,style={->}](v_2)
\Loop[dist=2cm,dir=SO,style={->}](v_4)
\Edge[style={->,bend right}](v_2)(v_1)
\Edge[style={->,bend right},](v_3)(v_4)
\Edge[style={->,bend right},label=2,labelstyle={right,fill=white}](v_3)(v_2)
\Edge[style={->,bend right}](v_4)(v_1)
\Edge[style={->,bend right}](v_4)(v_3)
\Edge[style={->,bend right},label=2,labelstyle={right,fill=white}](v_1)(v_4)
\end{tikzpicture}
\
\end{minipage}
\begin{minipage}{.3\textwidth}
\begin{tikzpicture}
[scale=.45]
\GraphInit[vstyle=Normal]
\SetGraphUnit{3}
\Vertices[Math]{circle}{v_1,v_2,v_3,v_4}
\Edge[style={->,bend right}](v_1)(v_2)
\Edge[style={->,bend right},label=2,labelstyle={right,fill=white}](v_2)(v_3)
\Loop[dist=2cm,dir=WE,style={->}](v_3)
\Loop[dist=2cm,dir=SO,style={->}](v_4)
\Edge[style={->,bend right}](v_2)(v_1)
\Edge[style={->,bend right},](v_3)(v_4)
\Edge[style={->,bend right}](v_3)(v_2)
\Edge[style={->,bend right}](v_4)(v_1)
\Edge[style={->,bend right}](v_4)(v_3)
\Edge[style={->,bend right},label=2,labelstyle={right,fill=white}](v_1)(v_4)
\end{tikzpicture}
\
\end{minipage}
\begin{minipage}{.3\textwidth}
\begin{tikzpicture}
[scale=.45]
\GraphInit[vstyle=Normal]
\SetGraphUnit{3}
\Vertex[Math,L=v_3,x=-1,y=-1]{v_5}
\Vertex[Math,L=v_4,x=1,y=1]{v_6}
\Vertex[Math,x=3,y=0,L=v_4]{v_1}
\Vertex[Math,L=v_1,x=0,y=3]{v_2}
\Vertex[Math,x=0,y=-3,L=v_3]{v_4}
\Vertex[Math,x=-3,y=0,L=v_2]{v_3}
\Edge[style={-,bend right}](v_1)(v_2)
\Edge[style={-,bend right}](v_2)(v_3)
\Edge[style={-,bend right}](v_3)(v_4)
\Edge[style={-,bend right}](v_4)(v_1)
\Edge(v_5)(v_6)
\Edge(v_5)(v_3)
\Edge(v_5)(v_4)
\Edge(v_6)(v_1)
\Edge(v_6)(v_2)
\end{tikzpicture}

\end{minipage}

\caption{The left and center orbigraphs are cospectral.  The left orbigraph is bad.  The center orbigraph is good as it is covered by the right-most graph using the indicated partition.}
  \label{goodbadcosp}
\end{figure}

In the following lemma a directed edge from vertex $v_1$ to vertex $v_2$ of weight $w$ is considered to contribute $w$ many different ways to move from $v_1$ to $v_2$.  The length spectrum of a graph is the finite list of non-negative integers where the $m$th number in the list counts the number of closed walks of length $m$ present in the graph.

\begin{lemma}\label{thm:LengthSpectrum}
      The eigenvalue spectrum of an orbigraph determines and is determined by the length spectrum of the orbigraph.
    \end{lemma}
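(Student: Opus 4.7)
The plan is to use the classical identity $\tr(A^m) = \sum_{j} \lambda_j^m$ and pass through Newton's identities to invert between power sums and elementary symmetric polynomials.

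First I would verify the combinatorial interpretation of powers of the adjacency matrix in the weighted setting. Let $A$ be the adjacency matrix of $\og$ and let $c_m$ denote the number of closed walks of length $m$ in $\og$, using the convention stated just before the lemma (a directed edge of weight $w$ contributes $w$ ways to traverse it). A standard induction on $m$ shows that $(A^m)_{ij}$ counts weighted walks of length $m$ from vertex $i$ to vertex $j$, because the identity $A^{m+1} = A \cdot A^m$ unpacks entrywise as summing over all possible first steps, and the weights multiply along a walk exactly as in the product of adjacency matrix entries. Summing over $i=j$ gives $c_m = \tr(A^m)$.

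Next, since the trace is a similarity invariant, $\tr(A^m) = \sum_{j=1}^n \lambda_j^m =: p_m$, where $\lambda_1, \dots, \lambda_n$ are the eigenvalues of $A$ with multiplicity. Thus knowledge of the eigenvalue spectrum immediately determines the power sums $p_m$ for all $m \ge 1$, and hence determines the length spectrum.

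For the converse direction, I would invoke Newton's identities, which recursively express the elementary symmetric polynomials $e_1(\lambda), \dots, e_n(\lambda)$ in terms of the power sums $p_1, \dots, p_n$. Since the $e_k(\lambda)$ are (up to sign) the coefficients of the characteristic polynomial of $A$, the first $n$ entries of the length spectrum determine the characteristic polynomial, and hence its roots with multiplicity, which are exactly the eigenvalues of $A$. The only step that required care was verifying $c_m = \tr(A^m)$ under the weighted-walk convention; everything else is classical linear algebra, so I do not anticipate a genuine obstacle.
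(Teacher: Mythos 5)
Your proposal is correct and follows essentially the same route as the paper: identify the number of closed walks of length $m$ with $\tr(A^m) = \sum_i \lambda_i^m$, then invert via Newton's identities to recover the characteristic polynomial from the power sums. Your extra care in verifying the weighted-walk interpretation of $(A^m)_{ij}$ is a sound addition but not a departure from the paper's argument.
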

    
    \begin{proof}
      Let $\og$ be a $k$-orbigraph, $A$ its adjacency matrix, and $w_m$ the number of closed walks in $\og$ of length $m$. We know that
        \begin{equation}\label{eq:TraceLengthSpectrum}
          w_m = \tr(A^m)
        \end{equation}
      because the diagonal of $A^m$ counts the number of closed walks of length $m$. However
      \begin{equation*}
        \tr(A^m) = \sum_{i=1}^n \lambda_i^m.
      \end{equation*}
      Thus the eigenvalue spectrum of $\og$ uniquely determines the length spectrum of $\Omega$, and conversely by Newton's identities \cite{MR1186460} the length spectrum of $\Omega$ uniquely determines the eigenvalue spectrum of $\og$.
    \end{proof}

We now prove that the number of singular points in an orbigraph is bounded above and below by spectrally determined quantities.

\begin{theorem}\label{thm:SingularBounds}
      Let $\Omega$ be an $k$-orbigraph with $n$ vertices. If $s$ is the number of singular points in $\Omega$, then we have
      $$
        \frac{\sum_{i=1}^n \lambda_i^2 - n k}{k^2 - k} \le s \le \sum_{i=1}^n \lambda_i^2 - n k
      $$
      where $\lambda_i$ are the eigenvalues of the adjacency matrix $A$ of $\Omega$.
    \end{theorem}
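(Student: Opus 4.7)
The plan is to use Lemma~\ref{thm:LengthSpectrum} to recast $\sum_{i=1}^n \lambda_i^2$ as $\tr(A^2) = \sum_v (A^2)_{vv}$, and then show that the nonnegative quantity $\sum_{i=1}^n \lambda_i^2 - nk$ admits a vertex-by-vertex decomposition in which each singular vertex contributes between $1$ and $k(k-1)$, while regular vertices contribute $0$.

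First I would expand $(A^2)_{vv} = \sum_u A_{vu} A_{uv}$ and apply condition~(iii) of Definition~\ref{def:OrbigraphDefn}: whenever $A_{vu} > 0$ we must have $A_{uv} \geq 1$, so $(A^2)_{vv} \geq \sum_u A_{vu} = k$. Subtracting gives the telescoping identity
\begin{equation*}
\sum_{i=1}^n \lambda_i^2 - nk \;=\; \sum_{v} \bigl[(A^2)_{vv} - k\bigr] \;=\; \sum_{u,v} A_{vu}(A_{uv} - 1).
\end{equation*}
By condition~(iii), every summand $A_{vu}(A_{uv}-1)$ is nonnegative (if $A_{uv}=0$ then $A_{vu}=0$ too), and it is strictly positive precisely when $A_{uv} \geq 2$, i.e., when the vertex $u$ is singular and $v$ is a ``large-weight'' out-neighbor of $u$. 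In particular, regular vertices $u$ make no contribution to this sum.

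For the upper bound $s \leq \sum \lambda_i^2 - nk$, I would group terms by the source index $u$. Each singular $u$ has at least one out-neighbor $v$ with $A_{uv}\ge 2$, and the corresponding summand satisfies $A_{vu}(A_{uv}-1) \geq 1 \cdot 1 = 1$ since $A_{vu}\ge 1$ by (iii). Because distinct singular vertices give distinct second coordinates, the total sum is at least $s$. For the lower bound I would bound each singular $u$'s contribution from above: if $u$'s large-weight out-edges have weights $a_1,\ldots,a_j \geq 2$ going to $v_1,\ldots,v_j$, then $\sum_i a_i \leq k$ (row sum) and $A_{v_i u} \leq k$, so the contribution from $u$ is
\begin{equation*}
\sum_{i=1}^{j} A_{v_i u}(a_i - 1) \;\leq\; k\sum_{i=1}^{j}(a_i - 1) \;\leq\; k(k - j) \;\leq\; k(k-1).
\end{equation*}
Summing over singular vertices yields $\sum \lambda_i^2 - nk \leq s(k^2 - k)$, which rearranges to the claimed lower bound on $s$.

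The main obstacle is conceptual rather than computational: spotting that the asymmetry measured by $A_{uv}-1$ is exactly the right weight-based proxy for singularity, so that condition~(iii) forces $(A^2)_{vv}-k \geq 0$ term-by-term. Once this decomposition is in hand, both bounds collapse to elementary per-vertex estimates, and double-counting is not an issue because the pairs $(v,u)$ are indexed by ordered pairs.
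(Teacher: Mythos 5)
Your proof is correct and follows essentially the same route as the paper: both reduce $\sum_i \lambda_i^2 - nk = \tr(A^2) - nk$ to the per-vertex surplus $\sum_{u,v} A_{vu}(A_{uv}-1)$ and bound each singular vertex's contribution between $1$ and $k^2-k$ using condition (iii) and the row sums. If anything, your explicit algebraic identity makes the upper-bound step (each singular vertex adds at least one uncounted closed $2$-walk) slightly more airtight than the paper's walk-counting phrasing, but the underlying argument is identical.
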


\begin{proof} First note that $\sum_{i=1}^n \lambda_i^2 = \tr(A^2)$ and by Lemma \ref{thm:LengthSpectrum} this quantity counts the number of closed walks of length $2$ in $\Omega$. A given vertex $v$ in $\Omega$ has outgoing edges with weights summing to $k$, each of which is matched by at least one incoming edge.  This implies the number of closed walks of length $2$ starting at $v$ is at least $k$.  Observing that there are $n$ vertices in $\Omega$ we obtain $\tr(A^2) \ge n k$.  Now suppose $v_1$ is a singular vertex in $\Omega$.  This vertex has at least one outgoing edge $(v_1,v_2)$ of weight greater than 1.  Edge $(v_1,v_2)$ contributes at least one closed walk of length two, beginning and ending at $v_2$, that has not yet been counted.  We conclude that  $\tr(A^2) \ge n k + s$ thus  $s \le \sum_{i=1}^n \lambda_i^2 - n k$. 

For the lower bound, note that each singular vertex $v_i$ contributes $A_{ji} (A_{ij} - 1)$ extra (i.e. beyond the initial $k$ length-two paths) length-two paths based at $v_j$.  Thus the total number of extra paths contributed by vertex $v_i$ is $\sum_{v_i \sim v_ j} A_{ji} (A_{ij} - 1)$.  We bound this quantity in terms of $k$,
\begin{align*}
\sum_{v_i \sim v_ j} A_{ji} (A_{ij} - 1)  &\le  \sum_{v_i \sim v_ j} k (A_{ij} - 1) \\
&= k \sum_{v_i \sim v_j} A_{ij} - \sum_{v_i \sim v_j}  k \\
&\le k^2 - k.
\end{align*}
Hence each singular vertex contributes at most $k^2 - k$ extra walks of length two, so $s(k^2 - k) \ge \sum_{i=1}^n \lambda_i^2 - n k$.  Isolating $s$ in this inequality completes the proof.

\end{proof}

\begin{remark}    The orbigraph with adjacency matrix $kI_n$, where $I_n$ denotes the $n\times n$ identity matrix, achieves the lower bound in Theorem \ref{thm:SingularBounds} for all choices of $k$ and $n$.  Thus this lower bound  is sharp in $k$ and $n$.
\end{remark}

\begin{corollary}\label{cor:charkreg}  Suppose $\og$ is a $k$-orbigraph with $n$ vertices.  Then $\og$ is isomorphic to a $k$-regular graph if and only if 
\[\sum_i \lambda_i^2 -nk = 0 \ \  \text{and} \ \ \sum_i \lambda_i =0.\]
\end{corollary}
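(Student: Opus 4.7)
The plan is to extract each direction directly from Theorem \ref{thm:SingularBounds} together with the elementary identity $\tr(A) = \sum_i \lambda_i$. No heavy machinery beyond that theorem and the orbigraph axioms will be needed.

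For the forward direction, I would assume $\og$ is isomorphic to a simple $k$-regular graph, so that $A$ is symmetric with zero diagonal, entries in $\{0,1\}$, and row sums equal to $k$. Then $\tr(A) = 0$ gives $\sum_i \lambda_i = 0$, and the routine computation $\tr(A^2) = \sum_{i,j} A_{ij} A_{ji} = \sum_{i,j} A_{ij}^2 = \sum_{i,j} A_{ij} = nk$ (using symmetry and the 0/1 condition) yields $\sum_i \lambda_i^2 - nk = 0$.

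For the reverse direction, I would first feed $\sum_i \lambda_i^2 - nk = 0$ into the upper bound of Theorem \ref{thm:SingularBounds}, which immediately forces $s \le 0$ and hence $s = 0$. By Definition \ref{def:SingularPoint} no outgoing edge of $\og$ can have weight exceeding one, so combined with $A_{ij} \in \mathbb{Z}_{\ge 0}$ we obtain $A_{ij} \in \{0,1\}$ for every pair $i,j$. Next I would use $\sum_i \lambda_i = \tr(A) = \sum_i A_{ii} = 0$ together with the nonnegativity of the diagonal entries to conclude $A_{ii} = 0$ for all $i$, ruling out loops. Finally, condition (iii) of Definition \ref{def:OrbigraphDefn} combined with the 0/1 restriction forces $A_{ij} = A_{ji}$, so $A$ is the symmetric, loopless, 0/1, row-sum-$k$ matrix of a simple $k$-regular graph.

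I do not expect any genuine obstacle here: the corollary simply repackages the upper bound from Theorem \ref{thm:SingularBounds} (which detects the absence of edge weights exceeding one) and the trace identity (which detects the absence of loops), while symmetry is automatic from the orbigraph axioms. The only thing to double-check in the writeup is that the argument passes through cleanly at $k=1$, which it does, since the upper bound in Theorem \ref{thm:SingularBounds} is used in both directions and the lower bound is never invoked.
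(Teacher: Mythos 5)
Your proposal is correct and follows essentially the same route as the paper: a direct trace/closed-walk computation for the forward direction, and the upper bound of Theorem \ref{thm:SingularBounds} (forcing $s=0$) plus $\tr(A)=0$ for the converse. The only difference is cosmetic — you make explicit that condition (iii) of Definition \ref{def:OrbigraphDefn} together with the $0/1$ entries yields $A_{ij}=A_{ji}$, a point the paper passes over by simply ``combining pairs of directed edges.''
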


\begin{proof}
 A simple $k$-regular graph $\og$ has no self loops, thus Lemma \ref{thm:LengthSpectrum} implies $\sum_i \lambda_i =0$.  Viewing each edge $\{v_i,v_j\}$ in $\og$ as two directed edges, $(v_i,v_j)$ and $(v_j,v_i)$, we see each vertex in $\og$ has exactly $k$ closed walks of length $2$.  Therefore $\sum_{i} \lambda_i^2 = n k$.

Conversely, assume that $\og$ is an orbigraph such that $\sum_{i} \lambda_i^2 = n k$ and $\sum_{i} \lambda_i = 0$. Then by Theorem \ref{thm:SingularBounds}, we have $s \le 0$. As $s \ge 0$ we see $s = 0$. Thus the outgoing edges of each vertex in $\og$ all have weight one. The second condition implies $\og$ has no loops.  By combining pairs of directed edges  $(v_i,v_j)$ and $(v_j,v_i)$ into a single undirected edge $\{v_i,v_j\}$ we obtain a simple $k$-regular graph.
\end{proof}

In the smooth setting it is not known if a manifold can have the same Laplace spectrum as a non-manifold orbifold.  We can resolve this question in the setting of orbigraphs.
\begin{corollary}  A regular graph and an orbigraph with one or more singular points cannot be cospectral.  
\end{corollary}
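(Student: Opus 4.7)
The plan is to derive this directly from Corollary \ref{cor:charkreg} by a short contradiction argument. Suppose, for contradiction, that a $k$-regular graph $\Gamma$ and an orbigraph $\Omega$ with at least one singular vertex share the same spectrum $\{\lambda_1,\dots,\lambda_n\}$.

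First I would reconcile the invariants that the spectrum already pins down. Since cospectral graphs have the same number of eigenvalues, $\Gamma$ and $\Omega$ have the same number of vertices $n$. By Remark~\ref{rem:easyspec} the spectral radius of a $k$-regular graph is $k$ and the spectral radius of an $\ell$-orbigraph is $\ell$, so cospectrality forces $\Omega$ to be a $k$-orbigraph with $n$ vertices as well.

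Next I would compute the two trace-type quantities that appear in Corollary~\ref{cor:charkreg} on the $\Gamma$ side, where they are easy: since $\Gamma$ is a simple $k$-regular graph it has no loops, so $\tr(A_\Gamma)=\sum_i\lambda_i=0$, and, as noted in the proof of Corollary~\ref{cor:charkreg}, each vertex has exactly $k$ closed walks of length $2$, giving $\sum_i\lambda_i^2=nk$. Cospectrality transfers both identities to $\Omega$: $\sum_i\lambda_i=0$ and $\sum_i\lambda_i^2-nk=0$.

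Finally I would invoke Corollary~\ref{cor:charkreg} in the reverse direction: these two equalities force $\Omega$ to be isomorphic to a $k$-regular graph. But a $k$-regular graph has all outgoing edge weights equal to $1$, hence no singular vertices by Definition~\ref{def:SingularPoint}, contradicting our assumption that $\Omega$ has a singular point. I do not anticipate a real obstacle here; the whole argument is essentially an unpacking of Corollary~\ref{cor:charkreg}, and the only mild subtlety is to verify at the outset that cospectrality actually forces $\Omega$ to have the same degree $k$ and the same number of vertices as $\Gamma$, which follows immediately from Remark~\ref{rem:easyspec}.
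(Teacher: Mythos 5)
Your proof is correct and follows essentially the same route as the paper's: use the spectral radius (Remark~\ref{rem:easyspec}) to match $k$ and $n$, apply the forward direction of Corollary~\ref{cor:charkreg} to the regular graph to get the two trace identities, transfer them by cospectrality, and invoke the reverse direction to force $s=0$, a contradiction.
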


\begin{proof} Suppose regular graph $\Gamma$ and orbigraph $\og$ are cospectral and that $\og$ contains $s\ge 1$ singular points.  By Remark \ref{rem:easyspec} the largest eigenvalue in the shared spectrum of  $\Gamma$ and $\og$ is the degree of regularity of each graph.  Denote this largest eigenvalue by $k$. In addition the shared spectrum implies that each graph has the same number of vertices $n$.  By the forward direction of Corollary \ref{cor:charkreg} the fact that $\Gamma$ is $k$-regular implies $\sum_i \lambda_i^2 -nk = 0$ and $\sum_i \lambda_i =0$.  However the backwards direction of Corollary \ref{cor:charkreg} implies $s=0$, a contradiction.
\end{proof}

\bibliographystyle{plain}
\bibliography{orbigraphsbib}

\end{document}